\documentclass[12pt]{article}
\setlength{\textwidth}{6.5in}
\setlength{\oddsidemargin}{.1in}
\setlength{\evensidemargin}{.1in}
\setlength{\topmargin}{-.5in}
\setlength{\textheight}{8.9in}

\usepackage{lineno,hyperref}
\modulolinenumbers[5]
\usepackage{color}

\usepackage{graphics,amsmath,amssymb}
\usepackage{amsthm}
\usepackage{amsfonts}
\usepackage{latexsym}
\usepackage{epsf}

\newcommand{\fstirling}[2]{\genfrac[]{0pt}{}{#1}{#2}}
\newcommand{\sstirling}[2]{\genfrac\{\}{0pt}{}{#1}{#2}}

\theoremstyle{plain}
\newtheorem{theorem}{Theorem}
\newtheorem{corollary}[theorem]{Corollary}

\theoremstyle{definition}

\theoremstyle{remark}
\newtheorem{remark}[theorem]{Remark}
\newtheorem{problem}[theorem]{Problem}

\begin{document}

\begin{center}
\vskip 1cm{\LARGE\bf Some Theorems and Applications of the\\
\vskip .1in
$(q,r)$-Whitney Numbers}
\vskip 1cm
\large    
Mahid M. Mangontarum\\
Department of Mathematics\\
Mindanao State University---Main Campus\\
Marawi City 9700\\
Philippines \\
\href{mailto:mmangontarum@yahoo.com}{\tt mmangontarum@yahoo.com} \\
\href{mailto:mangontarum.mahid@msumain.edu.ph}{\tt mangontarum.mahid@msumain.edu.ph} \\
\end{center}

\vskip .2 in

\begin{abstract}
The $(q,r)$-Whitney numbers were recently defined in terms of the
$q$-Boson operators, and several combinatorial properties which appear to
be $q$-analogues of similar properties were studied. In
this paper, we obtain elementary and complete symmetric polynomial
forms for the $(q,r)$-Whitney numbers, and give combinatorial
interpretations in the context of $A$-tableaux. We also obtain
convolution-type identities using the combinatorics of $A$-tableaux.
Lastly, we present applications and theorems related to discrete
$q$-distributions.

\medskip

\noindent\textbf{Keywords:} $(q,r)$-Whitney number, symmetric function, $A$-tableau, $q$-distribution.

\medskip

\noindent\textbf{2010 MSC:} Primary 11B83; Secondary 11B73, 05A30. 

\end{abstract}

\section{Introduction}
\label{sec:Introduction}
In a recent paper, the author and Katriel \cite{Mah2} introduced a new
approach to generate $q$-analogues of Stirling and Whitney-type numbers. 
In this paper, the $(q,r)$-Whitney numbers of the first and second kinds were defined as coefficients in
\begin{equation}
m^n(a^\dagger)^n a^n=\sum_{k=0}^{n}w_{m,r,q}(n,k)(ma^\dagger a+r)^k\label{qw1}
\end{equation}
and
\begin{equation}
(ma^\dagger a+r)^n=\sum_{k=0}^{n}m^kW_{m,r,q}(n,k)(a^\dagger)^k a^k,\label{qw2}
\end{equation}
respectively (cf.\ \cite{Mah2}), by using as framework, the $q$-Boson operators $a^{\dagger}$ and $a$ of Arik and Coon \cite{Arik} which satisfy the commutation relation
\begin{equation}
[a,a^\dagger]_q\equiv aa^\dagger-qa^\dagger a=1.\label{qbos}
\end{equation}
By convention, $w_{m,r,q}(0,0)=W_{m,r,q}(0,0)=1$ and $w_{m,r,q}(n,k)=W_{m,r,q}(n,k)=0$ for $k>n$ and for $k<0$. Several combinatorial properties were already established, including the following triangular recurrence relations \cite[Theorem\ 6]{Mah2}:
\begin{equation}
w_{m,r,q}(n+1,k)=q^{-n}\Big(w_{m,r,q}(n,k-1)-(m[n]_q+r)w_{m,r,q}(n,k)\Big), \label{identity4}
\end{equation}
with $[n]_q=\frac{q^n-1}{q-1}$, the $q$-integer, and
\begin{equation}
W_{m,r,q}(n+1,k)=q^{k-1}W_{m,r,q}(n,k-1)+(m[k]_q+r)W_{m,r,q}(n,k).\label{identity5}
\end{equation}
From here, one readily obtains
\begin{equation}
w_{m,r,q}(n,0)=(-1)^nq^{-\binom{n}{2}}\prod_{i=0}^{n-1}(m[i]_q+r),\label{w0}
\end{equation}
\begin{equation}
w_{m,r,q}(n,n)=q^{-\binom{n}{2}},\label{wn}
\end{equation}
\begin{equation}
W_{m,r,q}(n,0)=r^n,\label{W0}
\end{equation}
and
\begin{equation}
W_{m,r,q}(n,n)=q^{\binom{n}{2}}.\label{Wn}
\end{equation}
The identities presented in Eqs.~\eqref{identity4} and \eqref{identity5} can be used as tools to obtain further combinatorial identities for $w_{m,r,q}(n,k)$ and $W_{m,r,q}(n,k)$. For instance, with the aid of these recurrence relations, the vertical recurrence relations
\begin{equation}
w_{m,r,q}(n+1,k+1)=\sum_{j=k}^{n}(-1)^{n-j}q^{\binom{j}{2}-\binom{n+1}{2}}w_{m,r,q}(j,k)\prod_{i=j+1}^n(m[i]_q+r),\label{vertrec1}
\end{equation}
with $\prod_{i=j+1}^n(m[i]_q+r)=1$ when $j=n$, and
\begin{equation}
W_{m,r,q}(n+1,k+1)=q^k\sum_{j=k}^{n}(m[k+1]_q+r)^{n-j}W_{m,r,q}(j,k),\label{vertrec2}
\end{equation}
can be proved by induction, as well as the rational generating function of the $(q,r)$-Whitney numbers of the second kind given by
\begin{equation}
\sum_{n=k}^{\infty}W_{m,r,q}(n,k)t^n=\frac{q^{\binom{k}{2}}t^k}{\prod_{i=0}^k\left(1-(m[i]_q+r)t\right)}.\label{ratgenf}
\end{equation}
On the other hand, the horizontal recurrence relations 
\begin{equation}
w_{m,r,q}(n,k)=q^n\sum_{j=0}^{n-k}(m[n]_q+r)^jw_{m,r,q}(n+1,k+j+1)\label{horrec1}
\end{equation}
and
\begin{equation}
W_{m,r,q}(n,k)=\sum_{j=0}^{n-k}(-1)^{j}q^{\binom{k}{2}-\binom{k+j+1}{2}}\frac{\prod_{i=0}^{k+j}(m[i]_q+r)}{\prod_{i=0}^{k}(m[i]_q+r)}W_{m,r,q}(n+1,k+j+1)\label{horrec2}
\end{equation}
can be verified by evaluating the right-hand sides using 
Eqs.~\eqref{identity4} and \eqref{identity5}.
Before proceeding, we note that Eqs.~\eqref{vertrec1} and \eqref{vertrec2} follow a behaviour similar to that of the Chu-Shi-Chieh's identity (see \cite{Chen}) for the classical binomial coefficients given by
\begin{equation*}
\binom{n+1}{k+1}=\binom{k}{k}+\binom{k+1}{k}+\cdots+\binom{n}{k},
\end{equation*}
while Eqs.~\eqref{horrec1} and \eqref{horrec2} are analogous with
\begin{equation*}
\binom{n}{k}=\binom{n+1}{k+1}-\binom{n+1}{k+2}+\cdots+(-1)^{n-k}\binom{n+1}{n+1},
\end{equation*}
another known identity for the classical binomial coefficients.

The purpose of this paper is to express the $(q,r)$-Whitney numbers of both kinds in symmetric polynomial forms. This proves to be useful in establishing combinatorial interpretations in terms of $A$-tableaux. In return, remarkable convolution-type identities are obtained and several other interesting theorems are also presented.

\section{Explicit formulas in symmetric polynomial forms}

\subsection{$(q,r)$-Whitney numbers of the first kind}

Expanding the falling factorial $(x)_{n}=x(x-1)\cdots(x-n+1)$ in powers of $x$, we obtain
$$(x)_{n}=\sum_{k=0}^n (-1)^{n-k}x^k \sum_{1\leq i_1<i_2<\cdots<i_{n-k}\leq n-1}\prod_{j=1}^{n-k} i_j \, ,$$
which yield the well-known expression for the Stirling numbers of the first kind in terms of elementary symmetric functions. This relation can be generalized to the $q$-Stirling numbers as follows:
\begin{eqnarray*}
[x]_q[x-1]_q\cdots[x-n+1]_q &=& [x]_q([x]_q+q^x)([x]_q+q^x[2]_q)\cdots([x]_q+q^x[n-1]_q) \\
&=& \sum_{k=0}^n [x]_q^k\cdot q^{x(n-k)}\sum_{1\leq i_1<i_2<\cdots<i_{n-k}\leq n-1}\prod_{j=1}^{n-k}[i_j]_q \, .
\end{eqnarray*}

To further generalize this procedure to the $(q,r)$-Whitney numbers of the first kind, recall that application of both sides of the defining relation in 
Eq.~\eqref{qw1} on the $q$-boson number state $\left.|\ell\right\rangle$ gives
$$m^n[\ell]_q[\ell-1]_q\cdots[\ell-n+1]_q=\sum_{k=0}^n w_{m,r,q}(n,k)\Big(m[\ell]_q+r\Big)^k \, .$$
Since both sides of this relation are finite polynomials in $\ell$, and since the relation is valid for all integer $\ell$, it remains valid when $\ell$ is replaced by the real number $x$, i.e.,
\begin{equation}
m^n[x]_q[x-1]_q\cdots[x-n+1]_q=\sum_{k=0}^n w_{m,r,q}(n,k)\Big(m[x]_q+r\Big)^k \, .\label{wh1}
\end{equation}
Now, defining $y=[x]_q+\alpha$, where $\alpha=\frac{r}{m}$, we note that $[x-i]_q=q^{-i}(y-\alpha-[i]_q)$. Hence,
\begin{equation}
m^n[x]_q[x-1]_q\cdots[x-n+1]_q=\sum_{k=0}^n (m[x]_q+r)^k q^{-\binom{n}{2}}(-1)^{n-k}\sum_{0\leq i_1<i_2<\cdots<i_{n-k}\leq n-1}\prod_{j=1}^{n-k}(m[i_j]_q+r) \, .\label{wh2}
\end{equation}
The identity in the next theorem is obtained by comparing the right-hand-sides of Eqs.~\eqref{wh1} and \eqref{wh2}.
\begin{theorem}\label{t1}
The $(q,r)$-Whitney numbers of the first kind satisfy the following explicit form
\begin{equation}
w_{m,r,q}(n,k)=(-1)^{n-k}q^{-\binom{n}{2}}\sum_{0\leq i_1<i_2<\cdots<i_{n-k}\leq n-1}\prod_{j=1}^{n-k}(r+[i_j]_qm).\label{t1.1}
\end{equation}
\end{theorem}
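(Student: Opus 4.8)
The plan is to read off the coefficients of $(m[x]_q+r)^k$ from the two expansions of the single polynomial $m^n[x]_q[x-1]_q\cdots[x-n+1]_q$ already recorded in Eqs.~\eqref{wh1} and \eqref{wh2}. Equation~\eqref{wh1} is the operational identity \eqref{qw1} applied to $|\ell\rangle$ and extended from integer $\ell$ to real $x$, so its right-hand side is $\sum_{k=0}^n w_{m,r,q}(n,k)(m[x]_q+r)^k$; Equation~\eqref{wh2} rewrites the same left-hand side with explicit symmetric-function coefficients. Since the two right-hand sides agree identically, equating the coefficient of $(m[x]_q+r)^k$ yields \eqref{t1.1} at once. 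The only real content is therefore (i) to establish \eqref{wh2}, and (ii) to justify the coefficient comparison.

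For the derivation of \eqref{wh2} I would use $[x-i]_q=q^{-i}([x]_q-[i]_q)$ together with $\prod_{i=0}^{n-1}q^{-i}=q^{-\binom{n}{2}}$ to factor
$$m^n\prod_{i=0}^{n-1}[x-i]_q=q^{-\binom{n}{2}}\prod_{i=0}^{n-1}m([x]_q-[i]_q)=q^{-\binom{n}{2}}\prod_{i=0}^{n-1}\big((m[x]_q+r)-(m[i]_q+r)\big),$$
where the last equality uses the cancellation $m[x]_q-m[i]_q=(m[x]_q+r)-(m[i]_q+r)$. Writing $u=m[x]_q+r$ and $c_i=m[i]_q+r$, the product $\prod_{i=0}^{n-1}(u-c_i)$ expands as $\sum_{k=0}^n u^k(-1)^{n-k}e_{n-k}(c_0,\dots,c_{n-1})$, where $e_{n-k}$ is the elementary symmetric polynomial, and recognizing $e_{n-k}(c_0,\dots,c_{n-1})=\sum_{0\leq i_1<\cdots<i_{n-k}\leq n-1}\prod_{j=1}^{n-k}(r+[i_j]_qm)$ reproduces \eqref{wh2} exactly.

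The final step is the coefficient comparison, whose only subtlety is legitimacy. After the substitution $y=[x]_q+\alpha$, the quantity $u=m[x]_q+r$ behaves as a free variable: for $q\neq1$, $[x]_q=\tfrac{q^x-1}{q-1}$ takes infinitely many values as $x$ ranges over the reals, so the monomials $1,u,\dots,u^n$ are linearly independent as functions of $x$. Hence the expansion of a polynomial in powers of $u$ is unique, and matching the coefficient of $u^k$ in \eqref{wh1} and \eqref{wh2} gives \eqref{t1.1}.

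I expect no serious obstacle: the $q$-bookkeeping (collecting the $q^{-i}$ into $q^{-\binom{n}{2}}$) and the $r$-cancellation are the only places one could slip. As an independent check—and an alternative proof that avoids the analytic extension to real $x$—I would verify by induction that the right-hand side of \eqref{t1.1} satisfies the triangular recurrence \eqref{identity4}. Setting $f(n,k)=(-1)^{n-k}q^{-\binom{n}{2}}e_{n-k}(c_0,\dots,c_{n-1})$ and using the one-variable recursion $e_j(c_0,\dots,c_n)=e_j(c_0,\dots,c_{n-1})+c_n\,e_{j-1}(c_0,\dots,c_{n-1})$ together with $\binom{n+1}{2}=\binom{n}{2}+n$, one checks directly that $f(n+1,k)=q^{-n}\big(f(n,k-1)-(m[n]_q+r)f(n,k)\big)$, which, with the base values \eqref{w0}–\eqref{wn}, identifies $f(n,k)$ with $w_{m,r,q}(n,k)$.
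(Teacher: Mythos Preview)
Your proposal is correct and follows essentially the same approach as the paper: the main argument compares the two expansions \eqref{wh1} and \eqref{wh2} of $m^n[x]_q[x-1]_q\cdots[x-n+1]_q$ (you spell out the derivation of \eqref{wh2} via $[x-i]_q=q^{-i}([x]_q-[i]_q)$ and the expansion of $\prod(u-c_i)$ in elementary symmetric polynomials, which the paper only sketches), and your alternative inductive check via the recurrence \eqref{identity4} mirrors the paper's own ``Alternative proof of Theorem~\ref{t1}.''
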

\begin{remark}
The sum within this theorem is the symmetric polynomial of degree $n-k$ in the $n$ variables $\{(r+[i]_qm);i=0,1,\ldots, n-1\}$. For $r=0$ all the terms with $i_1=0$ vanish so the summation starts at 1, which is consistent with the expressions presented above for the Stirling and $q$-Stirling numbers of the first kind.
\end{remark}
The above theorem can also be proved by induction as follows:
\begin{proof}[Alternative proof of Theorem \ref{t1}]
The theorem readily yields $w_{m,r,q}(0,0)=1$.
Making the induction hypothesis that the theorem is true up to $n$, for all $k=0,1,\ldots, n$, we prove it
for $n+1$ and $k=0,1,\ldots,n$, via the recurrence relation \eqref{identity4}. Thus,
\begin{eqnarray*}
w_{m,r,q}(n+1,k) &=& q^{-n}\Big( (-1)^{n+1-k}q^{-\binom{n}{2}}\sum_{0\leq i_1<i_2<\cdots<i_{n+1-k}\leq n-1}\prod_{j=1}^{n+1-k}(r+[i_j]_qm) \\
& & -(m[n]_q+r)(-1)^{n-k}q^{-\binom{n}{2}}\sum_{0\leq i_1<i_2<\cdots<i_{n-k}\leq n-1}\prod_{j=1}^{n-k}(r+[i_j]_qm) \Big) \\
&=& q^{-\binom{n+1}{2}}(-1)^{n+1-k}q^{-\binom{n}{2}}\Big(  \sum_{0\leq i_1<i_2<\cdots<i_{n+1-k}\leq n-1}\prod_{j=1}^{n+1-k}(r+[i_j]_qm) \\
& & +(m[n]_q+r)\sum_{0\leq i_1<i_2<\cdots<i_{n-k}\leq n-1}\prod_{j=1}^{n-k}(r+[i_j]_qm) \Big)
\end{eqnarray*}
The first term within the large paretheses contains all products of $n+2-k$ distinct factors out of 
$\{ (r+[i]_qm);i=0,1,\ldots,n-1\}$, whereas the second term contains all products of $n+2-k$ distinct factors,
one of which is $(r+m[n]_q)$ and the others chosen out of $\{ (r+[i]_qm);i=0,1,\ldots,n-1\}$.
Together, these sums yield 
$$\sum_{0\leq i_1<i_2<\cdots<i_{n+1-k}\leq n}\prod_{j=0}^{n+1-k}(r+[i_j]_qm),$$
thus establishing the theorem for the range of indices specified above.
Finally, the theorem yields $w_{m,r,q}(n+1,n+1)=q^{-\binom{n+1}{2}}$, in agreement with \eqref{wn}.
\end{proof}

As $q\rightarrow 1$, the explicit formula \eqref{t1.1} reduces to an expression for the $r$-Whitney numbers of the first kind given by
\begin{equation}
w_{m,r}(n,k)=(-1)^{n-k}\sum_{0\leq i_1<i_2<\cdots<i_{n-k}\leq n-1}\prod_{j=1}^{n-k}(r+i_jm).
\end{equation}
An equivalent of this identity was reported by Mangontarum et al. \cite[Theorem\ 6]{Mah3}. For $m=1$ and $r=0$, \eqref{t1.1} reduces to an explicit formula for a $q$-analogue of the Stirling numbers of the first kind, viz,
\begin{equation}
\fstirling{n}{k}_q=(-1)^{n-k}q^{-\binom{n}{2}}\sum_{0\leq i_1<i_2<\cdots<i_{n-k}\leq n-1}\prod_{j=1}^{n-k}[i_j]_q,\label{t1.2}
\end{equation}
where $\fstirling{n}{k}_q$ denote the $q$-Stirling numbers of the first kind defined by
\begin{equation}
[x]_{q,n}=\sum_{k=1}^n(-1)^{n-k}\fstirling{n}{k}_q[x]_q^k,
\end{equation}
$[x]_{q,n}=[x]_q[x-1]_q[x-2]_q\cdots[x-n+1]_q$ (cf.\ \cite{Car3}). For any given set of $n-k$ integers that satisfy $1<i_2<\cdots<i_{n-k}<n-1$, let
$$\left\{\ell_1, \ell_2, \ldots, \ell_k\right\}\equiv\left\{1,2,3,\ldots,n-1\right\}-\left\{i_1,i_2,\ldots,i_{n-k}\right\}$$
be the complement with respect to $\{1,2,3,\ldots,n-1\}$. It follows that
\begin{equation}
\prod_{j=0}^{n-k}[i_j]_q=\frac{[n-1]_q!}{\prod_{j=0}^k[\ell_j]_q}.
\end{equation}
This allows \eqref{t1.2} to be written in the form
\begin{equation}
\fstirling{n}{k}_q=q^{-\binom{n}{2}}[n-1]_q!\sum_{0\leq i_1<i_2<\cdots<i_{n-k}\leq n-1}\frac{1}{\prod_{j=0}^k[\ell_j]_q}.\label{t1.3}
\end{equation}
As $q\rightarrow 1$, one recovers from \eqref{t1.2} Comtet's \cite{Comt} identity given by
\begin{equation}
\fstirling{n}{k}=(-1)^{n-k}\sum_{0\leq i_1<i_2<\cdots<i_{n-k}\leq n-1}\prod_{j=1}^{n-k}i_j,
\end{equation}
while \eqref{t1.3} yields Adamchik's \cite{Adam} identity for the Stirling numbers of the first kind given by
\begin{equation}
\fstirling{n}{k}=(n-1)!\sum_{0\leq i_1<i_2<\cdots<i_{n-k}\leq n-1}\frac{1}{\prod_{j=0}^k\ell_j}.
\end{equation}

\subsection{$(q,r)$-Whitney numbers of the second kind}

\begin{theorem}
The $(q,r)$-Whitney numbers of the second kind satisfy the following explicit form:
\begin{equation}
W_{m,r,q}(n,k)=q^{\binom{k}{2}}\sum_{c_0+c_1+\cdots+c_k=n-k}\prod_{j=0}^{k}(m[j]_q+r)^{c_j}, \label{expl2}
\end{equation}
where $c_0, c_1, \ldots, c_k$ are non-negative integers.
\end{theorem}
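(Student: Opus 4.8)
The plan is to recognize the inner sum in \eqref{expl2} as a complete homogeneous symmetric polynomial and then to match its generating function against the already-established rational generating function \eqref{ratgenf}. Writing $x_j=m[j]_q+r$ for $j=0,1,\ldots,k$, the sum $\sum_{c_0+\cdots+c_k=n-k}\prod_{j=0}^k x_j^{c_j}$ is exactly $h_{n-k}(x_0,\ldots,x_k)$, the complete homogeneous symmetric polynomial of degree $n-k$ in the $k+1$ variables $x_0,\ldots,x_k$. Thus the claim is equivalent to $W_{m,r,q}(n,k)=q^{\binom{k}{2}}h_{n-k}(x_0,\ldots,x_k)$.

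First I would invoke the classical generating function $\sum_{d\geq 0}h_d(x_0,\ldots,x_k)\,t^d=\prod_{i=0}^k(1-x_it)^{-1}$. Multiplying by $q^{\binom{k}{2}}t^k$ and reindexing the sum via $n=d+k$ gives
$$q^{\binom{k}{2}}\sum_{n=k}^{\infty}h_{n-k}(x_0,\ldots,x_k)\,t^n=\frac{q^{\binom{k}{2}}t^k}{\prod_{i=0}^k\left(1-(m[i]_q+r)t\right)}.$$
The right-hand side is precisely the generating function \eqref{ratgenf} for $\sum_{n\geq k}W_{m,r,q}(n,k)t^n$. Comparing the coefficients of $t^n$ then yields the theorem. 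The only bookkeeping is to confirm the lower summation limit and the empty-composition term at $n=k$, which reproduces $W_{m,r,q}(k,k)=q^{\binom{k}{2}}$ in agreement with \eqref{Wn}.

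Alternatively, one can argue by induction on $n$ using the triangular recurrence \eqref{identity5}. The base cases $k=0$ and $n=k$ reduce to \eqref{W0} and \eqref{Wn}. For the inductive step I would substitute the conjectured formula for $W_{m,r,q}(n,k-1)$ and $W_{m,r,q}(n,k)$ into \eqref{identity5}; the prefactor collapses because $k-1+\binom{k-1}{2}=\binom{k}{2}$, leaving
$$W_{m,r,q}(n+1,k)=q^{\binom{k}{2}}\Big(h_{n+1-k}(x_0,\ldots,x_{k-1})+x_k\,h_{n-k}(x_0,\ldots,x_k)\Big).$$
The bracket is resolved by the standard one-variable-at-a-time recurrence $h_d(x_0,\ldots,x_k)=h_d(x_0,\ldots,x_{k-1})+x_k\,h_{d-1}(x_0,\ldots,x_k)$ applied with $d=n+1-k$, which finishes the step.

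I expect the main obstacle to be organizational rather than deep. In the generating-function route one must correctly identify the sum as $h_{n-k}$ and keep the shift between the degree $n-k$ and the power $t^n$ straight; in the inductive route the delicate points are verifying the exponent identity $k-1+\binom{k-1}{2}=\binom{k}{2}$ and applying the symmetric-function recurrence with the correct variable set, noting that $x_k=m[k]_q+r$ occurs in $h_{n-k}(x_0,\ldots,x_k)$ but not in $h_{n+1-k}(x_0,\ldots,x_{k-1})$. Neither step involves a genuine difficulty, so the argument is short once the symmetric-function identity is in hand.
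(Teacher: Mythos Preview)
Your inductive alternative is essentially the paper's own proof: the paper proceeds by induction on $n$ via \eqref{identity5}, observing that after substitution the two resulting sums split the homogeneous monomials of degree $n+1-k$ according to whether or not the factor $m[k]_q+r$ appears---which is exactly the recurrence $h_d(x_0,\ldots,x_k)=h_d(x_0,\ldots,x_{k-1})+x_k\,h_{d-1}(x_0,\ldots,x_k)$ you invoke, stated in words rather than in symmetric-function notation. The exponent check $k-1+\binom{k-1}{2}=\binom{k}{2}$ and the boundary case $k=n+1$ are handled identically.

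Your primary route through the rational generating function \eqref{ratgenf} is a genuinely different and shorter argument. The paper does not take this path; it establishes \eqref{ratgenf} and \eqref{expl2} separately, each by induction from \eqref{identity5}. Your approach trades one induction for the other: once \eqref{ratgenf} is known, identifying its right-hand side with $q^{\binom{k}{2}}t^k\prod_{i=0}^k(1-x_it)^{-1}=q^{\binom{k}{2}}\sum_{n\ge k}h_{n-k}(x_0,\ldots,x_k)\,t^n$ gives \eqref{expl2} immediately by coefficient comparison. This is cleaner and makes the symmetric-function content explicit, but it is not self-contained in the way the paper's proof is, since it presupposes \eqref{ratgenf}. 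Conversely, the paper's inductive argument could itself be used to rederive \eqref{ratgenf} via your generating-function identity, so the two approaches are in effect equivalent reorganizations of the same recurrence.
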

\begin{proof}
We proceed by induction over $n$. First, we note that the theorem is satisfied when $n=k=0$. That is, $W_{m,r,q}(0,0)=1$.
Making the induction hypothesis that the theorem holds up to $n$ (for all $k=0,1,\ldots,n$) we show,
using the recurrence relation \eqref{identity5}, that it holds for $n+1$ and $k=0,1,\ldots,n$. Thus,
\begin{eqnarray*}
W_{m,r,q}(n+1,k) &=& q^{k-1}q^{\binom{k-1}{2}}\sum_{c_0+c_1+\cdots+c_{k-1}=n+1-k}\prod_{j=0}^{k-1}(m[j]_q+r)^{c_j}  \\
    & & \;\;\;\;\;\; +(m[k]_q+r)q^{\binom{k}{2}}\sum_{c_0+c_1+\cdots+c_k=n-k}\prod_{j=0}^k(m[j]_q+r)^{c_j} \\
    &=& q^{\binom{k}{2}}\Big(\sum_{c_0+c_1+\cdots+c_{k-1}=n+1-k}\prod_{j=0}^{k-1}(m[j]_q+r)^{c_j}  \\
    & & \;\;\;\;\;      +(m[k]_q+r)\sum_{c_0+c_1+\cdots+c_k=n-k}\prod_{j=0}^k(m[j]_q+r)^{c_j} \Big)
\end{eqnarray*}
Now, the first term within the big paretheses is a sum of products of $n+1-k$ factors, non of which contains $(m[k]_q+r)$.
The second term is again a sum of $n+1-k$ factors, each one of which containing $(m[k]_q+r)$ at least once.
Thus, 
$$W_{m,r,q}(n+1,k)=q^{\binom{k}{2}}\sum_{c_0+c_1+\cdots+c_k=n+1-k}\prod_{j=0}^k(m[j]_q+r)^{c_j}.$$
To complete the proof we need to show that the theorem holds for $n+1$ and $k=n+1$. For this case the theorem yields
$W_{m,r,q}(n+1,n+1)=q^{\binom{n+1}{2}}$, which is in agreement with \eqref{Wn}.
\end{proof}
Apart from $q^{\binom{k}{2}}$, \eqref{expl2} is a homogeneous complete symmetric polynomial of degree $n-k$ in the variables $\{(r+[j]_qm);j=0,1,2,\ldots,k\}$. As $q\rightarrow 1$, we obtain
\begin{equation}
W_{m,r}(n,k)=\sum_{c_0+c_1+\cdots+c_k=n-k}\prod_{j=0}^{k}(r+mj)^{c_j},
\end{equation}
and for $r=0$, \eqref{expl2} reduces to an expression for the $q$-Stirling numbers of the second kind, viz,
\begin{equation}
\sstirling{n}{k}_q=q^{\binom{k}{2}}\sum_{c_0+c_1+\cdots+c_k=n-k}[1]_q^{i_1}[2]_q^{i_2}\cdots[k]_q^{i_k}.\label{qS2}
\end{equation}
The $q$-Stirling numbers of the second kind were originally defined as
\begin{equation}
[x]_q^n=\sum_{k=1}^n\sstirling{n}{k}_q[x]_{q,k}
\end{equation}
(cf.\ \cite{Car3}). Moreover, when $q\rightarrow 1$, Eq.~\eqref{qS2} yields an expression for the classical Stirling numbers of the second kind reported by Comtet \cite{Comt}. 

Notice that from the inner product
\begin{equation}
\prod_{j=0}^{k}(m[j]_q+r)^{c_j}=(m[0]_q+r)^{c_0}(m[1]_q+r)^{c_1}(m[2]_q+r)^{c_2}\cdots(m[k]_q+r)^{c_k}
\end{equation}
in the explicit formula in \eqref{expl2}, we observe that there are exactly $n-k$ factors of $(m[j]_q+r)$ which is repeated $c_j$ times for each $j$. From here, we write 
\begin{equation*}
(m[0]_q+r)^{c_0}=(m[j_1]_q+r)(m[j_2]_q+r)\cdots(m[j_{c_0}]_q+r),
\end{equation*}
where $j_i=0$, $i=1,2,\ldots,c_0$;
\begin{equation*}
(m[1]_q+r)^{c_1}=(m[j_{c_0+1}]_q+r)(m[j_{c_0+2}]_q+r)\cdots(m[j_{c_0+c_1}]_q+r),
\end{equation*}
where $j_{c_0+i}=1$, $i=1,2,\ldots,c_1$; 
\begin{equation*}
(m[2]_q+r)^{c_2}=(m[j_{c_0+c_1+1}]_q+r)(m[j_{c_0++c_1+2}]_q+r)\cdots(m[j_{c_0+c_1+c_2}]_q+r),
\end{equation*}
where $j_{c_0+c_1+i}=2$, $i=1,2,\ldots,c_2$ and so on until
\begin{equation*}
(m[k]_q+r)^{c_k}=(m[j_{c_0+c_1+\cdots+c_{k-1}+1}]_q+r)(m[j_{c_0+c_1+\cdots+c_{k-1}+2}]_q+r)\cdots(m[j_{c_0+c_1+\cdots+c_k}]_q+r),
\end{equation*}
where $j_{c_0+c_1+\cdots+c_{k-1}+i}=k$, $i=1,2,\ldots,c_k$ and $c_0+c_1+c_2+\cdots+c_{k-1}+c_k=n-k$. Thus, $0\leq j_1\leq j_2\leq \cdots\leq j_{n-k}\leq k$ and we have
\begin{equation}
W_{m,r,q}(n,k)=q^{\binom{k}{2}}\sum_{0\leq j_1\leq j_2\leq \cdots\leq j_{n-k}\leq k}\prod_{i=1}^{n-k}(m[j_i]_q+r).
\end{equation}
We formally state this result in the next theorem.
\begin{theorem}
The $(q,r)$-Whitney numbers of the second kind satisfy the following explicit form:
\begin{equation}
W_{m,r,q}(n,k)=q^{\binom{k}{2}}\sum_{0\leq j_1\leq j_2\leq \cdots\leq j_{n-k}\leq k}\prod_{i=1}^{n-k}(m[j_i]_q+r).\label{expl3}
\end{equation}
\end{theorem}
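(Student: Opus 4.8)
The plan is to obtain \eqref{expl3} from the already-established explicit formula \eqref{expl2} by a standard multiset reindexing, so that no fresh induction is needed. Formula \eqref{expl2} expresses $W_{m,r,q}(n,k)$, up to the prefactor $q^{\binom{k}{2}}$, as a sum over weak compositions $(c_0,c_1,\ldots,c_k)$ of $n-k$ into $k+1$ nonnegative parts, of the monomial $\prod_{j=0}^k (m[j]_q+r)^{c_j}$. I would reinterpret each such monomial as a product of exactly $n-k$ factors drawn (with repetition) from the list $(m[0]_q+r),(m[1]_q+r),\ldots,(m[k]_q+r)$, where the factor $(m[j]_q+r)$ is used $c_j$ times.

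First I would make precise the bijection between weak compositions and weakly increasing sequences. To the composition $(c_0,\ldots,c_k)$ I associate the unique weakly increasing sequence $0\le j_1\le j_2\le\cdots\le j_{n-k}\le k$ that contains the value $j$ with multiplicity exactly $c_j$ for each $j=0,1,\ldots,k$; conversely, from any such sequence I recover the composition by setting $c_j$ equal to the number of indices $i$ with $j_i=j$. This is the familiar stars-and-bars correspondence, and it is clearly a bijection. Under it the monomial is preserved, $\prod_{j=0}^k (m[j]_q+r)^{c_j}=\prod_{i=1}^{n-k}(m[j_i]_q+r)$, since grouping the $n-k$ factors by their common value of $j_i$ recombines the repeated factors into the powers $(m[j]_q+r)^{c_j}$. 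Summing over all weakly increasing sequences and restoring the prefactor $q^{\binom{k}{2}}$ then yields \eqref{expl3}.

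I do not expect a genuine obstacle here, as the argument is a reindexing rather than a new computation; the only points worth checking are the boundary cases $n=k$ (empty product, both sides equal $q^{\binom{k}{2}}$) and $k=0$ (a single summand, giving $W_{m,r,q}(n,0)=r^n$, consistent with \eqref{W0}). As an independent cross-check I would verify \eqref{expl3} directly from the rational generating function \eqref{ratgenf}: expanding each factor $1/(1-(m[i]_q+r)t)$ as a geometric series and multiplying gives $\sum_{d\ge0} h_d\, t^d$, where $h_d=\sum_{0\le j_1\le\cdots\le j_d\le k}\prod_{i=1}^d (m[j_i]_q+r)$ is the complete homogeneous symmetric polynomial of degree $d$ in the $k+1$ variables $\{m[j]_q+r\}$; extracting the coefficient of $t^n$ from $q^{\binom{k}{2}}t^k/\prod_{i=0}^k(1-(m[i]_q+r)t)$ with $d=n-k$ reproduces \eqref{expl3} at once.
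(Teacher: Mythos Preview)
Your proposal is correct and follows essentially the same route as the paper: the paper also derives \eqref{expl3} directly from \eqref{expl2} by expanding each product $\prod_{j=0}^k (m[j]_q+r)^{c_j}$ into its $n-k$ individual factors and listing their indices in weakly increasing order, i.e., precisely the multiset/stars-and-bars reindexing you describe. Your added cross-check via the rational generating function \eqref{ratgenf} is a pleasant extra verification that the paper does not include, but the core argument is the same.
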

Notice that when $q\rightarrow 1$, we obtain an identity similar to the result obtained by Mangontarum et al. \cite[Theorem\ 11]{Mah3}.

\section{On the context of $A$-tableaux}

De Medicis and Leroux \cite{Med} defined a 0-1 tableau to be a pair $\varphi=(\lambda,f)$, where $\lambda=(\lambda_1\geq\lambda_2\geq\cdots\geq\lambda_k)$ is a partition of an integer $m$ and $f=(f_{ij})_{1\leq j\leq \lambda_i}$ is a ``filling" of the cells of the corresponding Ferrers diagram of shape $\lambda$ with 0's and 1's such that exactly one 1 in each column. For instance, 
Figure~\ref{fig1} represents the 0-1 tableau $\varphi=(\lambda,f)$, where $\lambda=(8,7,5,4,1)$ with 
$$f_{14}=f_{16}=f_{18}=f_{22}=f_{25}=f_{27}=f_{33}=f_{41}=1$$
and $f_{ij}=0$ elsewhere for $1\leq j\leq\lambda_i$.
\begin{figure}[htbp]
\begin{center}
     \begin{tabular}{|c|c|c|c|c|c|c|c|} \hline
         0 & 0 & 0 & 1 & 0 & 1 & 0 & 1      \\ \hline
         0 & 1 & 0 & 0 & 1 & 0 & 1        \\ \cline{1-7}
         0 & 0 & 1 & 0 & 0            \\ \cline{1-5}
         1 & 0 & 0 & 0             \\ \cline{1-4}
				 0                    \\ \cline{1-1}
     \end{tabular}
     \end{center}
     \caption{A 0-1 tableau $\varphi$}
     \label{0-1tableauexample}
     \label{fig1}
\end{figure}
In the same paper, an $A$-tableau is defined to be a list $\Phi$ of columns $c$ of a Ferrers diagram of a partition $\lambda$ (by decreasing order of length) such that the length $|c|$ is part of the sequence $A=(a_i)_{i\geq0}$, a strictly increasing sequences of non-negative integers. Combinatorial interpretations of Stirling-type numbers in terms of $A$-tableaux are already done in the past. Similar works can be seen in \cite{tina5,CorPJ,CorMon,Mah4,Med} and some of the references therein. In particular, Corcino and Montero \cite{CorMon} defined a $q$-analogue of the Rucinski-Voigt numbers (an equivalent of the $r$-Whitney numbers of the second kind) and then presented a combinatorial interpretation using the theory of $A$-tableaux. The same type of interpretation was obtained by Mangontarum et al. \cite{Mah4} for the case of the translated Whitney numbers (see \cite{Mangontarum3}) and their $q$-analogues. It is important to note that the $q$-analogues of these authors follow motivations which differ from that of the $(q,r)$-Whitney numbers. Furthermore, the numbers considered in the paper of Ram\'irez and Shattuck \cite{Ram} belong to $p,q$-analogues, a natural extension of $q$-analogues.

Now, we let $\omega$ be a function from the set of non-negative integers $N$ to a ring $K$, and suppose that $\Phi$ is an $A$-tableau with $r$ columns of length $|c|$. Also, it is known that $\Phi$ might contain a finite number of columns whose lengths are zero since $0\in A$ and if $\omega(0)\neq 0$ (cf.\ \cite{Med}). Before proceeding, we denote by $T^A(x,y)$ the set of $A$-tableaux with $A=\{0,1,2,\ldots,x\}$ and exactly $y$ columns (with some columns possibly of zero length), and by $T_d^A(x,y)$ the subset of $T^A(x,y)$ which contains all $A$-tableaux with columns of distinct lengths. The next theorem relates the $(q,r)$-Whitney numbers of both kinds to certain sets of $A$-tableaux.

\begin{theorem}\label{tableau1}
Let $\Omega:N\longrightarrow K$ and $\omega:N\longrightarrow K$ be functions from the set of non-negative integers $N$ to a ring $K$ (column weights according to length) defined by
$$\Omega(|c|)=m[|c|]_q+r$$
and
$$\omega(|c|)=m[|\bar{c}|]_q+r,$$
where $m$ and $r$ are complex numbers, $|c|$ is the length of column $c$ of an $A$-tableau in $T^A_d(n-1,n-k)$, and $|\bar{c}|$ is the length of column $c$ of an $A$-tableau in $T^A(k,n-k)$. Then
\begin{equation}
(-1)^{n-k}q^{\binom{n}{2}}w_{m,r,q}(n,k)=\sum_{\Phi\in T_d^A(n-1,n-k)}\prod_{c\in\Phi}\Omega(|c|)\label{tab1}
\end{equation}
and
\begin{equation}
q^{-\binom{k}{2}}W_{m,r,q}(n,k)=\sum_{\phi\in T^A(k,n-k)}\prod_{\bar{c}\in\phi}\omega(|\bar{c}|).\label{tab2}
\end{equation}
\end{theorem}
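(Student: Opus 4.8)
The plan is to read off both identities directly from the explicit symmetric-function formulas already established, namely the formula of Theorem~\ref{t1} for the first kind and formula \eqref{expl3} for the second kind. Each of these is a sum over integer sequences, and the content of the theorem is simply that these index sequences are in weight-preserving bijection with the relevant families of $A$-tableaux. So the argument is a translation rather than a fresh computation.

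For the first identity, I would begin by multiplying the formula of Theorem~\ref{t1} through by $(-1)^{n-k}q^{\binom{n}{2}}$, which cancels the sign and the power of $q$ and leaves
$$(-1)^{n-k}q^{\binom{n}{2}}w_{m,r,q}(n,k)=\sum_{0\leq i_1<i_2<\cdots<i_{n-k}\leq n-1}\prod_{j=1}^{n-k}(m[i_j]_q+r).$$
Next I would observe that a strictly increasing choice $0\le i_1<\cdots<i_{n-k}\le n-1$ is exactly the data of an $A$-tableau in $T_d^A(n-1,n-k)$: the $n-k$ columns, listed in decreasing order of length, have pairwise distinct lengths drawn from $A=\{0,1,\ldots,n-1\}$, so recording those lengths in increasing order recovers the $i_j$. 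Under this correspondence the term $\prod_{j}(m[i_j]_q+r)$ is precisely $\prod_{c\in\Phi}\Omega(|c|)$, since $\Omega(|c|)=m[|c|]_q+r$. Summing over all such tableaux therefore reproduces the displayed sum, which gives \eqref{tab1}.

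The second identity proceeds in the same spirit but from the complete-homogeneous form \eqref{expl3}. Dividing by $q^{\binom{k}{2}}$ yields
$$q^{-\binom{k}{2}}W_{m,r,q}(n,k)=\sum_{0\leq j_1\leq j_2\leq\cdots\leq j_{n-k}\leq k}\prod_{i=1}^{n-k}(m[j_i]_q+r).$$
Here a weakly increasing choice $0\le j_1\le\cdots\le j_{n-k}\le k$ records the (possibly repeated) column lengths of an $A$-tableau in $T^A(k,n-k)$, a family with $A=\{0,1,\ldots,k\}$ and exactly $n-k$ columns. Because repetitions are now permitted---and because $0\in A$ allows zero-length columns---the correct family is $T^A$ rather than $T_d^A$. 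The per-column weight $\omega(|\bar c|)=m[|\bar c|]_q+r$ matches each factor $(m[j_i]_q+r)$, so the sum over $\phi\in T^A(k,n-k)$ equals the displayed sum, establishing \eqref{tab2}.

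The routine algebra is trivial; the one point demanding care is the bookkeeping of the bijection between index sequences and tableaux. Specifically, I would need to confirm that the ``decreasing order of length'' convention in the definition of an $A$-tableau makes the correspondence well defined (a tableau determines, and is determined by, its multiset of column lengths), that zero-length columns are correctly counted among the $n-k$ columns in the second case, and that insisting on distinct lengths in the first case is exactly what converts the complete-homogeneous sum into the elementary-symmetric one. This is the main conceptual obstacle: verifying that the combinatorial models $T_d^A(n-1,n-k)$ and $T^A(k,n-k)$ encode, respectively, strictly increasing and weakly increasing length sequences, with no over- or under-counting.
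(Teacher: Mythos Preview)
Your proposal is correct and follows essentially the same approach as the paper: both arguments identify the bijection between $A$-tableaux and (strictly or weakly) increasing length sequences, then invoke the explicit formulas \eqref{t1.1} and \eqref{expl3}. The paper's proof runs the translation in the direction tableaux $\to$ index sequences and then cites Theorem~\ref{t1} (with ``The second result is obtained similarly'' for \eqref{tab2}), while you run it in the opposite direction; the content is the same.
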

\begin{proof}
Let $\Phi\in T_d^A(n-1,n-k)$. This means that $\Phi$ has exactly $n-k$ columns, say $c_1,c_2,\ldots,c_{n-k}$ whose lengths are $j_1,j_2,\ldots,j_{n-k}$, respectively. Now, for each column $c_i\in\Phi$, $i=1,2,3,\ldots,n-k$, we have $|c_i|=j_i$ and
$$\Omega(|c_i|)=m[|j_i|]_q+r.$$
Thus,
\begin{eqnarray*}
\prod_{c\in\Phi}\Omega(|c|)&=&\prod_{i=1}^{n-k}\Omega(|c_i|)\\
&=&\prod_{i=1}^{n-k}(m[j_i]_q+r).
\end{eqnarray*}
Since $\Phi\in T_d^A(n-1,n-k)$, then
\begin{eqnarray*}
\sum_{\Phi\in T_d^A(n-1,n-k)}\prod_{c\in\Phi}\Omega(|c|)&=&\sum_{0\leq j_1<j_2<\cdots<j_{n-k}\leq n-1}\prod_{c\in\Phi}\Omega(|c|)\\
&=&\sum_{0\leq j_1<j_2<\cdots<j_{n-k}\leq n-1}\prod_{i=1}^{n-k}(m[j_i]_q+r)\\
&=&(-1)^{n-k}q^{\binom{n}{2}}w_{m,r,q}(n,k).
\end{eqnarray*}
The second result is obtained similarly.
\end{proof}

\subsection{Combinatorics of $A$-tableaux}

In the following theorem, we will demonstrate the simple combinatorics of $A$-tableaux. To start, note that Eqs.~\eqref{tab1} and \eqref{tab2} are equivalent to
\begin{equation}
(-1)^{n-k}q^{\binom{n}{2}}w_{m,r,q}(n,k)=\sum_{\Phi\in T_d^A(n-1,n-k)}\Omega_A(\Phi)\label{tab1.1}
\end{equation}
and
\begin{equation}
q^{-\binom{k}{2}}W_{m,r,q}(n,k)=\sum_{\phi\in T^A(k,n-k)}\omega_A(\phi),\label{tab2.1}
\end{equation}
respectively, where
\begin{equation}
\Omega_A(\Phi)=\prod_{c\in\Phi}\Omega(|c|)=\prod_{c\in\Phi}(m[|c|]_q+r),\ |c|\in\{0,1,2,\ldots,n-1\}\label{tab1.2}
\end{equation}
and
\begin{equation}
\omega_A(\phi)=\prod_{\bar{c}\in\phi}\omega(|\bar{c}|)=\prod_{\bar{c}\in\phi}(m[|c|]_q+r),\ |\bar{c}|\in\{0,1,2,\ldots,k\}.\label{tab2.2}
\end{equation}
\begin{theorem}
For nonnegative integers $n$ and $k$, and complex numbers $m$ and $r$, the following identities hold:
\begin{equation}
w_{m,r,q}(n,k)=\sum_{j=k}^n\binom{j}{k}(-r_2)^{j-k}w_{m,r_1,q}(n,j)\label{tab3}
\end{equation}
\begin{equation}
W_{m,r,q}(n,k)=\sum_{j=k}^n\binom{n}{j}r_2^{n-j}W_{m,r_1,q}(j,k),\label{tab4}
\end{equation}
where $r_1+r_2=r$.
\end{theorem}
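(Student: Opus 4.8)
The plan is to derive both identities directly from the explicit symmetric-polynomial formulas already in hand, using the single algebraic observation that when $r=r_1+r_2$ each factor splits as $m[i]_q+r=(m[i]_q+r_1)+r_2$. The two kinds must be handled separately: the first kind is an elementary symmetric polynomial (distinct column lengths, as in $T_d^A$), while the second kind is a complete homogeneous symmetric polynomial (repeated lengths, as in $T^A$), and this difference is exactly what forces the two identities to carry different binomial coefficients.

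For \eqref{tab3} I would start from Theorem~\ref{t1} in its tableau form \eqref{tab1}, writing
\[
(-1)^{n-k}q^{\binom{n}{2}}w_{m,r,q}(n,k)=\sum_{0\le i_1<\cdots<i_{n-k}\le n-1}\ \prod_{t=1}^{n-k}\bigl((m[i_t]_q+r_1)+r_2\bigr),
\]
which is the elementary symmetric polynomial $e_{n-k}$ in the $n$ shifted variables $m[i]_q+r_1+r_2$. Expanding each of the $n-k$ chosen factors, I decide in each selected column whether to keep its weight $m[i_t]_q+r_1$ or to collapse it to the constant $r_2$. If $n-j$ columns keep their weight, they contribute a term of $e_{n-j}$ in the unshifted variables; the crux is that the remaining $j-k$ constant columns must take \emph{distinct} lengths among the $j$ lengths not yet used, which happens in $\binom{j}{j-k}=\binom{j}{k}$ ways. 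This gives $e_{n-k}(\text{shifted})=\sum_{j=k}^{n}\binom{j}{k}r_2^{\,j-k}e_{n-j}(\text{unshifted})$; re-identifying $e_{n-j}$ through Theorem~\ref{t1} at parameter $r_1$, and cancelling the prefactors $(-1)^{n-k}$ and $q^{\binom{n}{2}}$, produces \eqref{tab3}. The sign and $q$-power bookkeeping here is mechanical; the genuine content is the distinct-length count yielding $\binom{j}{k}$.

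For \eqref{tab4} I would begin from the complete-symmetric form \eqref{expl2} (equivalently \eqref{tab2}), so that $q^{-\binom{k}{2}}W_{m,r,q}(n,k)=h_{n-k}(\beta_0+r_2,\dots,\beta_k+r_2)$ with $\beta_i=m[i]_q+r_1$. Here the naive ``choose which columns carry $r_2$'' count fails, because repeated lengths make the binomial expansion of each $(\beta_i+r_2)^{c_i}$ interact with the composition $c_0+\cdots+c_k=n-k$, and this is precisely why the final coefficient is $\binom{n}{j}$ rather than a $\binom{n-k}{\,\cdot\,}$. The clean route is a generating-function substitution: from $\sum_d h_d(\beta_i+r_2)z^d=\prod_{i=0}^{k}\bigl(1-(\beta_i+r_2)z\bigr)^{-1}$, setting $w=z/(1-r_2z)$ factors the product as $(1-r_2z)^{-(k+1)}\prod_{i=0}^{k}(1-\beta_i w)^{-1}$, and extracting $[z^{n-k}]$ by means of $(1-r_2z)^{-(s+k+1)}=\sum_{\ell}\binom{s+k+\ell}{\ell}r_2^{\ell}z^{\ell}$ yields $h_{n-k}(\text{shifted})=\sum_{s=0}^{n-k}\binom{n}{s+k}r_2^{\,n-k-s}h_s(\text{unshifted})$. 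Putting $j=s+k$ and re-identifying $h_s$ with $W_{m,r_1,q}(j,k)$ via \eqref{expl2} then gives \eqref{tab4}.

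The main obstacle is not either computation in isolation but diagnosing that the two identities are combinatorially different in kind: the first-kind $\binom{j}{k}$ is a true distinct-length selection count and can be read off the $T_d^A$ decomposition, whereas the second-kind $\binom{n}{j}$ does not come from selecting columns and is cleanest to obtain from the substitution $w=z/(1-r_2z)$ (or, if a purely tableau-theoretic argument is insisted upon, from carefully distributing the $r_2$ factors over columns of repeated length). Once the correct binomial is secured in each case, the remaining re-indexing and cancellation of the prefactors $q^{\pm\binom{n}{2}}$, $q^{\binom{k}{2}}$ and the signs are routine.
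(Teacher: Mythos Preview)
Your argument is correct on both counts. For the first-kind identity \eqref{tab3} your expansion of $e_{n-k}$ in the shifted variables and the count of completions of $T$ to $S$ by $j-k$ distinct unused lengths is exactly the paper's tableau multiplicity argument, merely phrased in the language of subsets rather than of $T_d^A$; the paper arrives at the same $\binom{j}{k}$ by computing the multiplicity $\binom{n}{k}\binom{n-k}{\ell}/\binom{n}{\ell}=\binom{n-\ell}{k}$ and then reindexing $j=n-\ell$.

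For the second-kind identity \eqref{tab4} your route genuinely diverges from the paper's. The paper stays inside the $A$-tableau framework: it expands each column weight as $(m[j_i]_q+r_1)+r_2$, observes that $T^A(k,n-k)$ has $\binom{n}{k}$ elements, and computes the multiplicity of a distinct $\zeta$ with $\ell$ non-constant columns as $\binom{n}{k}\binom{n-k}{\ell}/\binom{\ell+k}{\ell}=\binom{n}{\ell+k}$, then reindexes $j=\ell+k$. Your substitution $w=z/(1-r_2z)$ in the generating function $\prod_{i=0}^k(1-(\beta_i+r_2)z)^{-1}$ sidesteps that multiset bookkeeping entirely and reads off $\binom{n}{j}$ from the negative-binomial expansion of $(1-r_2z)^{-(s+k+1)}$. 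Both are clean; the paper's version keeps the two proofs visually parallel and emphasizes the $A$-tableau interpretation promised by the section, while your version is shorter, explains \emph{why} the coefficient jumps from $\binom{j}{k}$ to $\binom{n}{j}$ (it is the multiset analogue forced by the extra $(1-r_2z)^{-(k+1)}$), and makes the link to the rational generating function \eqref{ratgenf} transparent.
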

\begin{proof}
Let $\Phi\in T_d^A(n-1)$. Substituting $j_i=|c|$ in Eq.~\eqref{tab1.2} gives
\begin{equation*}
\Omega_A(\Phi)=\prod_{i=1}^{n-k}(m[j_i]_q+r),
\end{equation*}
where $j_i\in\{0,1,2,\ldots,n-1\}$. Suppose that for some numbers $r_1$ and $r_2$, $r=r_1+r_2$. Then, with $\Omega^*(j_i)=m[j_i]_q+r_1$, we may write
\begin{eqnarray*}
\Omega_A(\Phi)&=&\prod_{i=1}^{n-k}\left[(m[j_i]_q+r_1)+r_2\right]\\
&=&\prod_{i=1}^{n-k}(\Omega^*(j_i)+r_2)\\
&=&\sum_{\ell=0}^{n-k}r_2^{n-k-\ell}\sum_{j_1\leq q_1<q_2<\cdots<q_{\ell}\leq j_{n-k}}\prod_{i=1}^{\ell}\Omega^{*}(q_i).
\end{eqnarray*}
Let $B_{\Phi}$ be the set of all $A$-tableaux corresponding to $\Phi$ such that for each $\psi\in B_{\Phi}$, one of the following is true:
\begin{description}
	\item[] $\psi$ has no column whose weight is $r_2$;
	\item[] $\psi$ has one column whose weight is $r_2$;
	\item[] $\psi$ has two columns whose weight are $r_2$;
	\item[] $\vdots$
	\item[] $\psi$ has $n-k$ columns whose weight are $r_2$.
\end{description}
Then,
\begin{equation*}
\Omega_A(\Phi)=\sum_{\psi\in B_{\Phi}}\Omega_A(\psi).
\end{equation*}
Now, if $\ell$ columns in $\psi$ have weights other than $r_2$, then
\begin{eqnarray*}
\Omega_A(\psi)&=&\prod_{c\in\psi}\Omega^*(|c|)\\
&=&r_2^{n-k\ell}\prod_{i=1}^{\ell}\Omega^*(q_i),
\end{eqnarray*}
where $q_1,q_2,q_3,\ldots,q_{\ell}\in\{j_1,j_2,j_3,\ldots,j_{n-k}\}$. Hence, 
Eq.~\eqref{tab1.1} may be written as
\begin{eqnarray*}
(-1)^{n-k}q^{\binom{n}{2}}w_{m,r,q}(n,k)&=&\sum_{\Phi\in T_d^A(n-1,n-k)}\Omega_A(\Phi)\\
&=&\sum_{\Phi\in T_d^A(n-1,n-k)}\sum_{\psi\in B_{\Phi}}\Omega_A(\psi).
\end{eqnarray*}
For each $\ell$, it is known that there correspond $\binom{n-k}{\ell}$ tableaux with $\ell$ distinct columns with weights $\Omega^*(q_i)$, $q_i\in\{j_1,j_2,\ldots,j_{n-k}\}$. Since $T_d^A(n-1,n-k)$ contains $\binom{n}{k}$ tableaux, then for each $\Phi\in T_d^A(n-1,n-k)$, the total number of $A$-tableaux $\psi$ corresponding to $\Phi$ is
$$\binom{n}{k}\binom{n-k}{\ell}.$$
However, only $\binom{n}{\ell}$ tableaux in $B_{\Phi}$ with $\ell$ distinct columns of weights other than $r_2$ are distinct. It then follows that every distinct tableau $\psi$ appears
$$\frac{\binom{n}{k}\binom{n-k}{\ell}}{\binom{n}{\ell}}=\binom{n-\ell}{k}$$
times in the collection (cf.\ \cite{CorPJ}). Thus, we consequently obtain
\begin{equation*}
(-1)^{n-k}q^{\binom{n}{2}}w_{m,r,q}(n,k)=\sum_{\ell=0}^{n-k}\binom{n-\ell}{k}r_2^{n-k-\ell}\sum_{\psi\in B_{\ell}}\prod_{c\in\psi}\Omega^{*}(|c|),
\end{equation*}
where $B_{\ell}$ denotes the set of all tableaux $\psi$ having $\ell$ distinct columns whose lengths are in the set $\{0,1,2,\ldots,n-1\}$. Reindexing the double sum yields
\begin{equation}
(-1)^{n-k}q^{\binom{n}{2}}w_{m,r,q}(n,k)=\sum_{j=k}^{n}\binom{j}{k}r_2^{j-k}\sum_{\psi\in B_{n-j}}\prod_{c\in\psi}\Omega^{*}(|c|).\label{tab3.1}
\end{equation}
Since $B_{n-j}=T_d^A(n-1,n-j)$, then
\begin{equation}
\sum_{\psi\in B_{n-j}}\prod_{c\in\psi}\Omega^{*}(|c|)=(-1)^{n-j}q^{\binom{n}{2}}w_{m,r_1,q}(n,j).\label{tab4.1}
\end{equation}
Combining Eqs.~\eqref{tab3.1} and \eqref{tab4.1} gives
\begin{equation}
(-1)^{n-k}q^{\binom{n}{2}}w_{m,r,q}(n,k)=\sum_{j=k}^{n}\binom{j}{k}r_2^{j-k}(-1)^{n-j}q^{\binom{n}{2}}w_{m,r_1,q}(n,j)
\end{equation}
which is equivalent to the desired result in Eq.~\eqref{tab3}. Similarly, if $\phi\in T^A(n-1)$, then substituting $j_i=|\bar{c}|$ in Eq.~\eqref{tab2.2} gives
\begin{equation*}
\omega_A(\phi)=\prod_{i=1}^{n-k}(m[j_i]_q+r),
\end{equation*}
where $j_i\in\{0,1,2,\ldots,k\}$. If for some numbers $r_1$ and $r_2$, $r=r_1+r_2$, then
\begin{eqnarray*}
\omega_A(\phi)&=&\prod_{i=1}^{n-k}\left[(m[j_i]_q+r_1)+r_2\right]\\
&=&\prod_{i=1}^{n-k}(\omega^*(j_i)+r_2),\ \omega^*(j_i)=m[j_i]_q+r_1\\
&=&\sum_{\ell=0}^{n-k}r_2^{n-k-\ell}\sum_{j_1\leq q_1\leq q_2\leq \cdots\leq q_{\ell}\leq j_{n-k}}\prod_{i=1}^{\ell}\omega^{*}(q_i).
\end{eqnarray*}
Suppose $\bar{B}_{\phi}$ is the set of all $A$-tableaux corresponding to $\phi$ such that for each $\zeta\in \bar{B}_{\phi}$, one of the following is true:
\begin{description}
	\item[] $\zeta$ has no column whose weight is $r_2$;
	\item[] $\zeta$ has one column whose weight is $r_2$;
	\item[] $\zeta$ has two columns whose weight are $r_2$;
	\item[] $\vdots$
	\item[] $\zeta$ has $n-k$ columns whose weight are $r_2$.
\end{description}
Then, we may write
\begin{equation*}
\omega_A(\phi)=\sum_{\zeta\in \bar{B}_{\phi}}\omega_A(\zeta).
\end{equation*}
If there are $\ell$ columns in $\zeta$ with weights other than $r_2$, then we have
\begin{eqnarray*}
\omega_A(\zeta)&=&\prod_{\bar{c}\in\zeta}\omega^*(|\bar{c}|)\\
&=&r_2^{n-k\ell}\prod_{i=1}^{\ell}\omega^*(q_i),
\end{eqnarray*}
where $q_1,q_2,q_3,\ldots,q_{\ell}\in\{j_1,j_2,j_3,\ldots,j_{n-k}\}$. It then follows that Eq.~\eqref{tab2.1} may be expressed as
\begin{eqnarray*}
q^{-\binom{k}{2}}W_{m,r,q}(n,k)&=&\sum_{\phi\in T^A(k,n-k)}\omega_A(\phi)\\
&=&\sum_{\phi\in T^A(k,n-k)}\sum_{\zeta\in \bar{B}_{\phi}}\omega_A(\zeta).
\end{eqnarray*}
Like in the previous, for each $\ell$, there correspond $\binom{n-k}{\ell}$ tableaux with $\ell$ columns having weights $\omega^*(q_i)$, $q_i\in\{j_1,j_2,\ldots,j_{n-k}\}$. Since the set $T^A(k,n-k)$ contains $\binom{n}{k}$ tableaux, then for each $\phi\in T^A(k,n-k)$, there are
$$\binom{n}{k}\binom{n-k}{\ell}$$
$A$-tableaux corresponding to $\phi$. But only $\binom{\ell+k}{\ell}$ of these tableaux are distinct. Hence, every distinct tableau $\zeta$ with $\ell$ columns of weights other than $r_2$ appears
$$\frac{\binom{n}{k}\binom{n-k}{\ell}}{\binom{\ell+k}{\ell}}=\binom{n}{\ell+k}$$
times in the collection (cf.\ \cite{tina5}). It implies that
\begin{equation*}
q^{-\binom{k}{2}}W_{m,r,q}(n,k)=\sum_{\ell=0}^{n-k}\binom{n}{\ell+k}r_2^{n-k-\ell}\sum_{\zeta\in \bar{B}_{\ell}}\prod_{\bar{c}\in\zeta}\omega^{*}(|\bar{c}|),
\end{equation*}
where $\bar{B}_{\ell}$ is the set of all tableaux $\zeta$ having $\ell$ columns of weights $\omega^*(j_i)$. Reindexing the sums yield
\begin{equation}
q^{-\binom{k}{2}}W_{m,r,q}(n,k)=\sum_{j=k}^{n}\binom{n}{j}r_2^{n-j}\sum_{\zeta\in \bar{B}_{j-k}}\prod_{\bar{c}\in\zeta}\omega^{*}(|\bar{c}|).\label{tab3.2}
\end{equation}
Since $\bar{B}_{n-j}=T^A(k,n-j)$, then
\begin{equation}
\sum_{\zeta\in \bar{B}_{j-k}}\prod_{\bar{c}\in\zeta}\omega^{*}(|\bar{c}|)=q^{-\binom{k}{2}}W_{m,r_1,q}(j,k).\label{tab4.2}
\end{equation}
Moreover, by Eqs.~\eqref{tab3.2} and \eqref{tab4.2}, we obtain
\begin{equation}
q^{-\binom{k}{2}}W_{m,r,q}(n,k)=\sum_{j=k}^{n}\binom{n}{j}r_2^{n-j}q^{-\binom{k}{2}}W_{m,r_1,q}(j,k)
\end{equation}
which is equivalent to the second desired result.
\end{proof}

Let $r_1=r-1$ and $r_2=1$ in Eqs.~\eqref{tab3} and \eqref{tab4}. Then.
\begin{equation}
w_{m,r,q}(n,k)=\sum_{j=k}^n\binom{j}{k}(-1)^{j-k}w_{m,r-1,q}(n,j)
\end{equation}
and
\begin{equation}
W_{m,r,q}(n,k)=\sum_{j=k}^n\binom{n}{j}W_{m,r-1,q}(j,k).
\end{equation}
These identities were first seen in \cite[Theorem\ 9]{Mah2}. Now, using 
Eq.~\eqref{tab4}, the $(q,r)$-Dowling numbers $D_{m,r,q}(n)$ \cite{Mah2} may be expressed as
\begin{eqnarray*}
D_{m,r,q}(n)&=&\sum_{k=0}^nW_{m,r,q}(n,k)\\
&=&\sum_{k=0}^n\sum_{j=k}^n\binom{n}{j}W_{m,r-1,q}(j,k)\\
&=&\sum_{j=0}^n\binom{n}{j}\sum_{k=0}^jW_{m,r-1,q}(j,k)\\
&=&\sum_{j=0}^n\binom{n}{j}D_{m,r-1,q}(j).
\end{eqnarray*}
Moreover, by applying the binomial inversion formula \cite{Comt}
$$f_n=\sum_{j=0}^n\binom{n}{j}g_j\Longleftrightarrow g_n=\sum_{j=0}^n(-1)^{n-j}\binom{n}{j}f_j$$
to this identity gives
\begin{equation*}
D_{m,r-1,q}(n)=\sum_{j=0}^n(-1)^{n-j}\binom{n}{j}D_{m,r,q}(j).
\end{equation*}
These results are formally stated in the following corollary:
\begin{corollary}
The $(q,r)$-Dowling numbers satisfy the recurrence relations with respect to $r$ given by
\begin{equation}
D_{m,r+1,q}(n)=\sum_{j=0}^n\binom{n}{j}D_{m,r,q}(j)\label{DowlInv1}
\end{equation}
and
\begin{equation}
D_{m,r,q}(n)=\sum_{j=0}^n(-1)^{n-j}\binom{n}{j}D_{m,r+1,q}(j).\label{DowlInv2}
\end{equation}
\end{corollary}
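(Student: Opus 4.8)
The plan is to derive both recurrences directly from the second convolution identity \eqref{tab4} together with the defining relation $D_{m,r,q}(n)=\sum_{k=0}^n W_{m,r,q}(n,k)$ for the $(q,r)$-Dowling numbers. The essential observation is that \eqref{tab4} relates $W_{m,r,q}$ to $W_{m,r_1,q}$ for any decomposition $r=r_1+r_2$, and choosing $r_1=r$, $r_2=1$ collapses the extra parameter into a unit shift of $r$, which is exactly what the corollary records.

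First I would specialize \eqref{tab4} with $r_1=r$ and $r_2=1$, so that the left-hand side becomes $W_{m,r+1,q}(n,k)$ and the weight $r_2^{n-j}$ reduces to $1$, giving
\begin{equation*}
W_{m,r+1,q}(n,k)=\sum_{j=k}^n\binom{n}{j}W_{m,r,q}(j,k).
\end{equation*}
Summing this over $k=0,1,\ldots,n$ and interchanging the order of the double summation — over the index set $0\le k\le j\le n$, so that for fixed $j$ the index $k$ runs from $0$ to $j$ — yields
\begin{equation*}
D_{m,r+1,q}(n)=\sum_{k=0}^n\sum_{j=k}^n\binom{n}{j}W_{m,r,q}(j,k)=\sum_{j=0}^n\binom{n}{j}\sum_{k=0}^jW_{m,r,q}(j,k),
\end{equation*}
and recognizing the inner sum as $D_{m,r,q}(j)$ establishes \eqref{DowlInv1}.

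For the second relation I would apply the binomial inversion formula quoted in the excerpt to the pair $f_n=D_{m,r+1,q}(n)$ and $g_n=D_{m,r,q}(n)$: since \eqref{DowlInv1} is precisely the statement $f_n=\sum_{j=0}^n\binom{n}{j}g_j$, inversion immediately returns $g_n=\sum_{j=0}^n(-1)^{n-j}\binom{n}{j}f_j$, which is \eqref{DowlInv2}. The computation is otherwise routine; the only point demanding genuine care is the interchange of summation order and the correct bookkeeping of the triangular index set, so that the inner sum is truly the full Dowling sum $\sum_{k=0}^j W_{m,r,q}(j,k)$ and not a truncated one. Once that interchange is justified, no real obstacle remains, since the binomial inversion is supplied as a black box by the excerpt.
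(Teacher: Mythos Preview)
Your proof is correct and follows essentially the same route as the paper: specialize \eqref{tab4} to a unit shift of $r$, sum over $k$, swap the order of summation to recognize the Dowling numbers, and then apply binomial inversion. The only cosmetic difference is that the paper takes $r_1=r-1$, $r_2=1$ (yielding $D_{m,r,q}(n)=\sum_j\binom{n}{j}D_{m,r-1,q}(j)$) and then re-labels, whereas you go directly with $r_1=r$, $r_2=1$ applied to the parameter $r+1$; the arguments are otherwise identical.
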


\begin{remark}
When $q\rightarrow 1$ and $m=\beta$, we obtain the following identities by Corcino and Corcino \cite{CorTin2}:
\begin{equation}
G_{n,\beta,r+1}=\sum_{j=0}^n\binom{n}{j}G_{j,\beta,r}
\end{equation}
\begin{equation}
G_{n,\beta,r}=\sum_{j=0}^n(-1)^{n-j}\binom{n}{j}G_{j,\beta,r+1},
\end{equation}
where $G_{n,\beta,r}:=D_{\beta,r,1}(n)$ is the generalized Bell numbers in \cite{Cor,CorTin2}. These identities were used to identify the Hankel transform of $G_{n,\beta,r}$.
\end{remark}

Looking at the previous corollary, we see that the sequence $\big(D_{m,r+1,q}(n)\big)$ is the binomial transform of the sequence $\big(D_{m,r,q}(n)\big)$, for $r=0,1,2,\ldots$. Using ``Layman's Theorem" \cite{Layman}, $\big(D_{m,0,q}(n)\big),\big(D_{m,1,q}(n)\big),\big(D_{m,2,q}(n)\big),\ldots,\big(D_{m,r,q}(n)\big),\ldots$ have the same Hankel transform. This directs our attention to the following open problem:
\begin{problem}
Is it possible to identify the Hankel transform of $D_{m,r,q}(n)$ using a method parallel to what is being done in \cite{CorTin2} for $G_{n,\beta,r}$?
\end{problem}

\subsection{Convolution-type identities}

Recall that for any two sequences $a_n$ and $b_n$, we call the sequence $c_n$ as convolution sequence if
\begin{equation}
c_n=\sum_{k=1}^na_nb_{n-k},\ n=0,1,2,\ldots.
\end{equation}
One of the most famous convolution-type identity is the Vandermonde's formula \cite{Chen,Comt} given by
\begin{equation}
\binom{a+b}{n}=\sum_{k=0}^n\binom{a}{k}\binom{b}{n-k}.
\end{equation}
The following theorem contains convolution-type identities for the $(q,r)$-Whitney numbers of the first kind which will be proved using the combinatorics of $A$-tableaux:

\begin{theorem}
The $(q,r)$-Whitney numbers of the first kind have convolution-type identities given by
\begin{equation}
w_{m,r,q}(p+j,n)=q^{-pj}\sum_{k=0}^nw_{m,r,q}(p,k)w_{\bar{m},\bar{r},q}(j,n-k)\label{convo1}
\end{equation}
and
\begin{equation}
w_{m,r,q}(n+1,j+p+1)=\sum_{k=0}^nq^{k^2-nk-n}w_{m,r,q}(k,p)w_{\bar{m},\bar{r},q}(n-k,j),\label{convo2}
\end{equation}
where $\bar{m}=mq^p$ and $\bar{r}=m[p]_q+r$.
\end{theorem}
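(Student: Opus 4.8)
The plan is to read both identities off the explicit symmetric-function formula of Theorem~\ref{t1}, equivalently the $A$-tableau model of Theorem~\ref{tableau1}, using as the single algebraic engine the $q$-integer addition rule $[p+s]_q=[p]_q+q^p[s]_q$. Applied to a column weight this gives $r+m[p+s]_q=(m[p]_q+r)+mq^p[s]_q=\bar{r}+\bar{m}[s]_q$, so a column of length $p+s$ in the original weighting has exactly the weight of a column of length $s$ in the barred weighting, with no stray power of $q$. This is precisely the device needed to convert ``tall'' columns into barred tableaux.

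First I would prove \eqref{convo1}. Strip the prefactors by writing $w_{m,r,q}(N,K)=(-1)^{N-K}q^{-\binom{N}{2}}S(N,K)$, where $S(N,K)=\sum_{0\le i_1<\cdots<i_{N-K}\le N-1}\prod_{\ell}(r+m[i_\ell]_q)$ is the elementary symmetric sum of Theorem~\ref{t1}. A strictly increasing selection counted by $S(p+j,n)$ lives in $\{0,1,\dots,p+j-1\}$; split it at $p$ into its part in $\{0,\dots,p-1\}$ and its part in $\{p,\dots,p+j-1\}$. If the low part has $p-k$ entries it contributes $S(p,k)$, while the high part, reindexed by $i\mapsto i-p$ and rewritten through $r+m[p+s]_q=\bar{r}+\bar{m}[s]_q$, contributes the barred sum $\bar{S}(j,n-k)$; summing over the split size yields $S(p+j,n)=\sum_{k=0}^{n}S(p,k)\,\bar{S}(j,n-k)$. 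Reinstating the prefactors, the signs collapse to the common factor $(-1)^{p+j-n}$ and the powers of $q$ reconcile through $\binom{p+j}{2}=\binom{p}{2}+\binom{j}{2}+pj$, which is exactly the source of the global factor $q^{-pj}$ in \eqref{convo1}.

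For \eqref{convo2} I would try the same mechanism, but here the decomposition is no longer a single fixed cut of the ground set, and that is what makes it the hard part. The quadratic exponent $q^{k^2-nk-n}$ is, encouragingly, forced by bookkeeping alone: stripping prefactors as above, the three factors $q^{-\binom{k}{2}}$, $q^{-\binom{n-k}{2}}$ and the claimed $q^{k^2-nk-n}$ combine to $q^{-\binom{n+1}{2}}$ precisely because $\binom{n+1}{2}=\binom{k}{2}+\binom{n-k}{2}-(k^2-nk-n)$, and the signs again collapse to $(-1)^{n-j-p}$. Thus \eqref{convo2} is equivalent to a pure convolution of symmetric sums, and the whole task is to realise the summation variable $k$ combinatorially: I expect one must mark a distinguished column (or a distinguished gap in the complementary set of unused lengths) so that $k$ records where the ground set $\{0,\dots,n\}$ is severed, the block below the mark producing $w_{m,r,q}(k,p)$ and the block above it producing the barred factor after a second application of the addition rule. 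The main obstacle is exactly this reindexing across the interior cut: unlike \eqref{convo1}, the high block must be shifted past a length that depends on $k$, and reconciling that shift and any residual power of $q$ it creates with the clean exponent $k^2-nk-n$ is the delicate step; should the direct tableau bijection prove unwieldy, a safe fallback is induction on $n$ via the triangular recurrence \eqref{identity4}, using \eqref{convo1} as the base mechanism.
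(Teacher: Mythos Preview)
Your treatment of \eqref{convo1} is exactly the paper's argument, only phrased in the symmetric-function language rather than in terms of $A$-tableaux: split the ground set $\{0,\dots,p+j-1\}$ at the fixed cut $p$, apply $r+m[p+s]_q=\bar r+\bar m[s]_q$ to the high block, and reconcile the prefactors via $\binom{p+j}{2}=\binom{p}{2}+\binom{j}{2}+pj$. Nothing is missing there.

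For \eqref{convo2} the paper does precisely what you anticipate. It severs $\{0,1,\dots,n\}$ at the moving point $k$, taking $B_1=\{0,\dots,k-1\}$ and $B_2=\{k+1,\dots,n\}$, so that $k$ itself is the distinguished \emph{gap}; in effect $k$ is the $(p{+}1)$-st element of the complement of the chosen column-lengths, which is what makes the decomposition bijective. Your worry about the reindexing of the high block is not a phantom. The natural shift of $B_2$ is by $k+1$, which via the addition rule produces parameters $mq^{k+1}$ and $m[k+1]_q+r$ that depend on $k$, whereas the paper's displayed computation writes the shift as $p$ throughout and arrives at the $k$-independent pair $\bar m=mq^{p}$, $\bar r=m[p]_q+r$. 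That step in the paper is at best a typographical slip and at worst an error in the stated identity: already for $n=1$, $p=j=0$ one gets $w_{m,r,q}(2,1)=-q^{-1}(m+2r)$ on the left but $-2q^{-1}r$ on the right with the fixed $\bar m,\bar r$. So your hesitation is well placed; if you carry your own derivation through honestly, you will obtain the version of \eqref{convo2} with barred parameters $mq^{k+1}$, $m[k+1]_q+r$ varying with $k$, and the exponent bookkeeping you already checked then closes the argument without any residual power of $q$. No induction fallback is needed.
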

\begin{proof}
For $A_1=\{0,1,2,\ldots,p-1\}$ and $A_2=\{p,p+1,p+2,\ldots,p+j-1\}$, let $\Phi_1\in T_d^{A_1}(p-1,p-k)$ and $\Phi_2\in T_d^{A_2}(j-1,j-n+k)$. Note that by joining the columns of the tableaux $\Phi_1$ and $\Phi_2$, we may generate an $A$-tableau $\Phi$ with $p+j-n$ distinct columns whose lengths are in the set $A=\{0,1,2,\ldots,p+j-1\}$. That is, $\Phi\in T_d^{A}(p+j-1,p+j-n)$. Hence,
\begin{equation*}
\sum_{\Phi\in T_d^{A}(p+j-1,p+j-n)}\Omega_A(\Phi)=\sum_{k=0}^n\left\{\sum_{\Phi_1\in T_d^{A_1}(p-1,p-k)}\Omega_{A_1}(\Phi_1)\right\}\left\{\sum_{\Phi_2\in T_d^{A_2}(j-1,j-n+k)}\Omega_{A_2}(\Phi_2)\right\}.
\end{equation*}
Note that in the right-hand side, we get
\begin{eqnarray*}
\sum_{\Phi_2\in T_d^{A_2}(j-1,j-n+k)}\Omega_{A_2}(\Phi_2)&=&\sum_{p\leq g_1<g_2<\cdots<g_{j-n+k}\leq p+j-1}\prod_{i=1}^{j-n+k}(m[g_i]_q+r)\\
&=&\sum_{0\leq g_1<g_2<\cdots<g_{j-n+k}\leq j-1}\prod_{i=1}^{j-n+k}(m[p+g_i]_q+r)\\
&=&\sum_{0\leq g_1<g_2<\cdots<g_{j-n+k}\leq j-1}\prod_{i=1}^{j-n+k}\left(mq^p[g_i]_q+([p]_q+r)\right)\\
&=&(-1)^{j-n+k}q^{\binom{j}{2}}w_{\bar{m},\bar{r},q}(j,n-k),
\end{eqnarray*}
where $\bar{m}=mq^p$ and $\bar{r}=m[p]_q+r$. Also, using Eq.~\eqref{tab1.1},
\begin{equation*}
\sum_{\Phi_1\in T_d^{A_1}(p-1,p-k)}\Omega_{A_1}(\Phi_1)=(-1)^{p-k}q^{\binom{p}{2}}w_{m,r,q}(p,k)
\end{equation*}
and
\begin{equation*}
\sum_{\Phi\in T_d^{A}(p+j-1,p+j-n)}\Omega_A(\Phi)=(-1)^{p+j-n}q^{\binom{p+j}{2}}w_{m,r,q}(p+j,n).
\end{equation*}
Hence, by simplification, we obtain the convolution identity \eqref{convo1}. Similarly, we let $\Phi_1$ be a tableau with $k-p$ columns whose lengths are in $B_1=\{0,1,2,\ldots,k-1\}$ and $\Phi_2$ be a tableau with $n-k-j$ columns whose lengths are in $B_2=\{k+1,k+2,\ldots,n\}$ so that $\Phi\in T_d^{B_1}(k-1,k-p)$ and $\Phi\in T_d^{B_2}(n-k-1,n-k-j)$. Note that we may generate an $A$-tableau $\Phi$ by joining the columns of $\Phi_1$ and $\Phi_2$ whose lengths are in $A=\{0,1,2,\ldots,n\}$. Hence, we have
\begin{equation*}
\sum_{\Phi\in T_d^{A}(n,n-j-p)}\Omega_A(\Phi)=\sum_{k=0}^n\left\{\sum_{\Phi_1\in T_d^{B_1}(k-1,k-p)}\Omega_{B_1}(\Phi_1)\right\}\left\{\sum_{\Phi_2\in T_d^{B_2}(n-k-11,n-k-j)}\Omega_{B_2}(\Phi_2)\right\}.
\end{equation*}
Applying Eq.~\eqref{tab1.1} gives
\begin{equation*}
\sum_{\Phi\in T_d^{A}(n,n-j-p)}\Omega_A(\Phi)=(-1)^{n-j-p}q^{\binom{n+1}{2}}w_{m,r,q}(n+1,j+p+1)
\end{equation*}
and
\begin{equation*}
\sum_{\Phi_1\in T_d^{B_1}(k-1,k-p)}\Omega_{B_1}(\Phi_1)=(-1)^{k-p}q^{\binom{k}{2}}w_{m,r,q}(k,p).
\end{equation*}
Also, in the right-hand side, we get
\begin{eqnarray*}
\sum_{\Phi_2\in T_d^{B_2}(n-k-11,n-k-j)}\Omega_{B_2}(\Phi_2)&=&\sum_{p\leq g_1<g_2<\cdots<g_{n-k-j}\leq p+n-k-1}\prod_{i=1}^{n-k-j}(m[g_i]_q+r)\\
&=&\sum_{0\leq g_1<g_2<\cdots<g_{n-k-j}\leq n-k-1}\prod_{i=1}^{n-k-j}(m[p+g_i]_q+r)\\
&=&\sum_{0\leq g_1<g_2<\cdots<g_{n-k-j}\leq n-k-1}\prod_{i=1}^{n-k-j}\left(mq^p[g_i]_q+([p]_q+r)\right)\\
&=&(-1)^{n-k-j}q^{\binom{n-k}{2}}w_{\bar{m},\bar{r},q}(n-k,j),
\end{eqnarray*}
where $\bar{m}=mq^p$ and $\bar{r}=m[p]_q+r$. This completes the proof.
\end{proof}

The next theorem can be proved similarly.
\begin{theorem}
The $(q,r)$-Whitney numbers of the second kind have convolution-type identities given by
\begin{equation}
W_{m,r,q}(n+1,j+p+1)=\sum_{k=0}^nq^{p+pj+j}W_{m,r,q}(k,p)W_{\hat{m},\hat{r},q}(n-k,j)
\end{equation}
and
\begin{equation}
W_{m,r,q}(p+j,n)=\sum_{k=0}^nq^{nk-k^2}W_{m,r,q}(p,k)W_{\hat{m},\hat{r},q}(j,n-k),
\end{equation}
where $\hat{m}=mq^{p+1}$ and $\hat{r}=m[p+1]_q+r$.
\end{theorem}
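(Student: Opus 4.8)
The plan is to mirror the $A$-tableaux argument used for the $(q,r)$-Whitney numbers of the first kind, but now built on the representation \eqref{tab2.1}, in which $q^{-\binom{k}{2}}W_{m,r,q}(n,k)$ is realized as the total $\omega_A$-weight of the tableaux in $T^A(k,n-k)$, i.e.\ of the lists of $n-k$ columns with weakly increasing lengths drawn from $\{0,1,\ldots,k\}$ and weight $m[\,\cdot\,]_q+r$ per column. First I would translate each of the three Whitney numbers appearing in an identity into such a weighted tableau sum. For the first identity, the left-hand side $W_{m,r,q}(n+1,j+p+1)$ becomes, up to the prefactor $q^{\binom{j+p+1}{2}}$, the weight of all tableaux with $n-p-j$ columns whose lengths lie in $\{0,1,\ldots,j+p+1\}$. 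I would then split this length range at $p$, writing each such tableau uniquely as the concatenation of a lower block $\phi_1$ (lengths in $\{0,\ldots,p\}$) and an upper block $\phi_2$ (lengths in $\{p+1,\ldots,j+p+1\}$); because the lengths are weakly increasing and the two ranges are disjoint and contiguous, this splitting is a bijection, and letting $\phi_1$ carry $k-p$ columns while $\phi_2$ carries $n-k-j$ columns produces the sum over $k$.

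The key local computation is the shift identity $m[p+1+g]_q+r=(mq^{p+1})[g]_q+(m[p+1]_q+r)=\hat m[g]_q+\hat r$, exactly as in the previous proof but with shift $p+1$ rather than $p$; it lets me rewrite the weight of the upper block $\phi_2$, after subtracting $p+1$ from every column length, as the $\hat\omega_A$-weight of a tableau in $T^A(j,n-k-j)$. Applying \eqref{tab2.1} to each block then identifies $\sum\omega_A(\phi_1)=q^{-\binom{p}{2}}W_{m,r,q}(k,p)$ and $\sum\hat\omega_A(\phi_2)=q^{-\binom{j}{2}}W_{\hat m,\hat r,q}(n-k,j)$. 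Collecting the three $q^{\binom{\cdot}{2}}$ prefactors leaves the single power $q^{\binom{j+p+1}{2}-\binom{p}{2}-\binom{j}{2}}$, and the elementary identity $\binom{j+p+1}{2}-\binom{p}{2}-\binom{j}{2}=p+pj+j$ reproduces the stated exponent $q^{p+pj+j}$. The second identity I would treat in the same spirit, splitting the length range $\{0,\ldots,n\}$ of $W_{m,r,q}(p+j,n)$ so that the factors become $W_{m,r,q}(p,k)$ and $W_{\hat m,\hat r,q}(j,n-k)$; the prefactor bookkeeping then yields $q^{\binom{n}{2}-\binom{k}{2}-\binom{n-k}{2}}=q^{nk-k^{2}}$, matching the claim.

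The routine part is the $q$-power accounting, which reduces entirely to the two binomial identities above. The step I expect to be the real obstacle is verifying that the tableau decomposition is genuinely a weight-preserving bijection for the second kind: unlike the first-kind case the columns may repeat and the inequalities are weak, so I must check both that concatenating a weakly increasing lower block with a weakly increasing upper block on disjoint, contiguous length ranges always yields a valid weakly increasing tableau, and that summing over the split $k$ of the columns of the left-hand tableau between the two blocks reconstructs each tableau in the target set exactly once. Pinning down the exact contiguous partition of the length range that makes the shift parameters come out to the stated $\hat m=mq^{p+1}$ and $\hat r=m[p+1]_q+r$, and confirming that no length is double counted or omitted at the junction between the two blocks, is where the argument must be handled with the greatest care.
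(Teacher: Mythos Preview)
Your plan for the first identity is correct and is exactly what the paper intends: the paper writes only that the theorem ``can be proved similarly'' to the first-kind convolution theorem, and your bijective splitting of the column-length range $\{0,\dots,j+p+1\}$ into $\{0,\dots,p\}$ and $\{p+1,\dots,j+p+1\}$, together with the shift $m[p+1+g]_q+r=\hat m[g]_q+\hat r$ and the bookkeeping $\binom{j+p+1}{2}-\binom{p}{2}-\binom{j}{2}=p+pj+j$, carries this through cleanly.

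For the second identity, however, the caution you voice at the end is exactly where the argument breaks. The factor $W_{m,r,q}(p,k)$ forces the lower block $\phi_1$ to have column-length range $\{0,\dots,k\}$, while the stated parameters $\hat m=mq^{p+1}$, $\hat r=m[p+1]_q+r$ force the upper block $\phi_2$, after unshifting, to have column-length range $\{p+1,\dots,p+1+(n-k)\}$. These two ranges do \emph{not} concatenate to $\{0,\dots,n\}$ unless $k=p$, so there is no weight-preserving bijection of the kind you describe. In fact the identity itself, as printed, is false: for $p=j=n=1$ the left side is $W_{m,r,q}(2,1)=m+2r$, while the right side equals $W_{m,r,q}(1,0)W_{\hat m,\hat r,q}(1,1)+W_{m,r,q}(1,1)W_{\hat m,\hat r,q}(1,0)=r+\hat r=m(1+q)+2r$. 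Thus neither your outline nor the paper's ``similarly'' can establish the second identity in the stated form; the shift parameters that actually emerge from a contiguous split of $\{0,\dots,n\}$ depend on the summation index $k$, not on $p$.
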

As $q\rightarrow 1$, we recover from Theorems 10 and 11 the results recently obtained by Xu and Zhou \cite[Theorems\ 2.1\ and\ 2.4]{Xu}.

\section{On Heine and Euler distributions}

Consider the Poisson distribution
\begin{equation}
f_X(x)=e^{-\lambda}\frac{\lambda^x}{x!},\label{Poiss}
\end{equation}
for $x=0,1,2,\ldots$. 
The factorial moment of a Poisson random variable is readily evaluated, i.e., 
\begin{equation}
E\big[(X)_n\big]=\lambda^n\label{factmom}
\end{equation}
the mean, $E\big[X\big]=\lambda$,  being the special case $n=1$. 
Expanding $x^n$ in terms of falling factorials (using the Stirling numbers of the second kind), we obtain the $n$-th moment of $X$ given by
\begin{equation}
E\big[X^n\big]=B_n(\lambda),\label{expb}
\end{equation}
where $B_n(\lambda)$ are the Bell polynomials. The $q$-analogues of the Poisson distribution introduced by Kemp \cite{Kemp}, and Benkherouf and Bather in \cite{Ben} are given by
\begin{equation}
f_{Y}(y)=e_{q}(-\lambda)q^{\binom{y}{2}}\frac{\lambda^{y}}{[y]_{q}!}, y=0, 1, 2, \ldots\label{heine}
\end{equation}
and
\begin{equation}
f_{Z}(z)=\widehat{e}_{q}(-\lambda)\frac{\lambda^{z}}{[z]_{q}!}, z=0, 1, 2, \ldots.\label{euler}
\end{equation}
These are called Heine and Euler distributions, respectively, where
\begin{equation}
e_{q}(t)=\sum^{\infty}_{k=0}\frac{t^{k}}{[k]_{q}!}\label{expfunc1}
\end{equation}
and
\begin{equation}
\widehat{e}_{q}(t)=\sum^{\infty}_{k=0}q^{\binom{k}{2}}\frac{t^{k}}{[k]_{q}!}.\label{expfunc2}
\end{equation}
In line with this, Charalambides and Papadatos \cite{Cha} obtained the following important results:
\begin{equation}
E\big[[Y]_{r,q}\big]=\frac{q^{\binom{r}{2}}\lambda^{r}}{\prod_{i=1}^r(1+\lambda(1-q)q^{i-1})},\label{qfacth}
\end{equation}
\begin{equation}
E\big[[Z]_{r,q}\big]=\lambda^r,\label{qfacte}
\end{equation}
where $[x]_{r,q}=[x]_q[x-1]_q[x-2]_q\cdots[x-r+1]_q$ is the $q$-falling factorial of $x$ of order $r$. Considering these, we now state the following theorem:
\begin{theorem}
If $Y$ and $Z$ are random variables with Heine and Euler distributions, respectively, and if the mean of $Y$ is $\phi=\frac{\lambda}{1+\lambda(1-q)}$ and the mean of $Z$ is $\lambda$, then
\begin{equation}
E_{\phi}\big[(m[Y]_q+r)^n\big]=\sum_{\ell=0}^n\sum_{i=0}^n(-\lambda)^iq^{-\binom{\ell}{2}-\ell i}\frac{\lambda^{\ell}}{[\ell]_q![i]_q!}\frac{(m[\ell]_q+r)^n}{\prod_{j=1}^{\ell+i}(1+\lambda(1-q)q^{j-1})},\label{qgenD1}
\end{equation}
\begin{equation}
E_{\lambda}\big[(m[Z]_q+r)^n\big]=\widehat{e}_q(-\lambda)\sum_{\ell=0}^n\frac{\lambda^{\ell}}{[\ell]_q!}(m[\ell]_q+r)^n.\label{qgenD}
\end{equation}
\end{theorem}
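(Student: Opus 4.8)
The backbone of both identities is the $q$-factorial expansion of $(m[x]_q+r)^n$. Applying the operator identity \eqref{qw2} to the $q$-boson number state $|x\rangle$ (using $(a^\dagger)^k a^k|x\rangle=[x]_{k,q}|x\rangle$ and $a^\dagger a|x\rangle=[x]_q|x\rangle$), one obtains
\[
(m[x]_q+r)^n=\sum_{k=0}^{n}m^k W_{m,r,q}(n,k)\,[x]_{k,q},
\]
where $[x]_{k,q}=[x]_q[x-1]_q\cdots[x-k+1]_q$. This holds for every nonnegative integer $x$, which is all that is needed since $Y$ and $Z$ are integer-valued. By linearity of expectation it gives, for either law, $E\big[(m[X]_q+r)^n\big]=\sum_{k=0}^{n}m^k W_{m,r,q}(n,k)\,E\big[[X]_{k,q}\big]$, so that the two factorial-moment formulas \eqref{qfacte} and \eqref{qfacth} reduce each expectation to a finite, explicitly summable expression. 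This is the step I would establish first, since it is common to both parts.

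The Euler identity \eqref{qgenD} then follows with essentially no computation. Substituting the mass function \eqref{euler} into $E_\lambda\big[(m[Z]_q+r)^n\big]=\sum_{z\ge 0}(m[z]_q+r)^n f_Z(z)$ and pulling out the normalizer $\widehat e_q(-\lambda)$ reproduces the right-hand side after relabeling $z=\ell$, the summation running over all nonnegative $\ell$. As an independent check I would feed $E\big[[Z]_{k,q}\big]=\lambda^k$ from \eqref{qfacte} into the backbone and confirm that $\sum_{k}m^kW_{m,r,q}(n,k)\lambda^k$ agrees with the series form.

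For the Heine identity \eqref{qgenD1} the same substitution of \eqref{heine} gives the single series $E_\phi\big[(m[Y]_q+r)^n\big]=e_q(-\lambda)\sum_{\ell\ge 0}(m[\ell]_q+r)^n q^{\binom{\ell}{2}}\tfrac{\lambda^{\ell}}{[\ell]_q!}$, and the plan is to convert this single series into the claimed double sum. I would expand the normalizer as the $q$-exponential series \eqref{expfunc1}, $e_q(-\lambda)=\sum_{i\ge 0}(-\lambda)^i/[i]_q!$, and rewrite the remaining $q$-power weight through the product structure that underlies \eqref{qfacth}: via the standard Euler product $\widehat e_q(x)=\prod_{j\ge 0}\big(1+(1-q)q^{j}x\big)$, the denominators appearing in \eqref{qgenD1} are exactly the partial products $\widehat e_q(\lambda)/\widehat e_q(\lambda q^{s})=\prod_{j=1}^{s}\big(1+\lambda(1-q)q^{j-1}\big)$, with $s=\ell+i$ the order relevant to the inner sum, and the factorial moments \eqref{qfacth} arise precisely as such ratios. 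Interchanging the $\ell$- and $i$-summations should then assemble the weight $(-\lambda)^i q^{-\binom{\ell}{2}-\ell i}\lambda^{\ell}/([\ell]_q![i]_q!)$ attached to each $(m[\ell]_q+r)^n$.

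The routine portion is the Euler case and the backbone expansion; the delicate portion, and where I expect the real work to lie, is the Heine bookkeeping after the interchange of sums. The stated weight carries $q^{-\binom{\ell}{2}}$ and the finite product $\prod_{j=1}^{\ell+i}$, both of which can only emerge from cancellation among the normalizer $e_q(-\lambda)$, the factor $q^{\binom{\ell}{2}}$, and a $q$-series expansion of the reciprocal product; keeping the powers of $q$ and $\lambda$ and the index ranges consistent through this cancellation is the crux. Before committing to the general manipulation I would verify \eqref{qgenD1} directly at $n=0$ and $n=1$ to fix the signs, $q$-exponents, and the exact summation ranges, and use those small cases to calibrate the precise $q$-exponential/product identity needed for the inner $i$-sum.
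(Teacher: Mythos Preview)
Your backbone expansion in the first paragraph is exactly what the paper uses, and your direct substitution of the Euler mass function is a perfectly valid derivation of an expectation formula. One small mismatch: direct substitution gives the infinite series $\widehat e_q(-\lambda)\sum_{\ell\ge 0}(m[\ell]_q+r)^n\lambda^\ell/[\ell]_q!$, whereas the stated \eqref{qgenD} is the \emph{finite} sum $\sum_{\ell=0}^n$; the finite version is what comes out of the backbone route (your ``independent check'') after inserting the explicit formula \eqref{identity3}.

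The Heine plan, however, has a real gap. Direct substitution of \eqref{heine} followed by expanding $e_q(-\lambda)=\sum_{i\ge 0}(-\lambda)^i/[i]_q!$ yields
\[
\sum_{\ell\ge 0}\sum_{i\ge 0}(m[\ell]_q+r)^n\,q^{\binom{\ell}{2}}\,\frac{(-\lambda)^i\lambda^{\ell}}{[\ell]_q!\,[i]_q!},
\]
an expression with \emph{no} product denominator and with $q^{+\binom{\ell}{2}}$ rather than $q^{-\binom{\ell}{2}-\ell i}$. The target \eqref{qgenD1} is not a termwise rewriting of this; for fixed $(\ell,i)$ the two summands differ by a $\lambda$-dependent factor, so no amount of interchanging sums or invoking Euler products will ``assemble'' the weight $\prod_{j=1}^{\ell+i}\bigl(1+\lambda(1-q)q^{j-1}\bigr)^{-1}$ from an expression that contains none. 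What the paper does instead is stay on the backbone side throughout: it writes $E_\phi\bigl[(m[Y]_q+r)^n\bigr]=\sum_{k=0}^n m^kW_{m,r,q}(n,k)\,q^{\binom{k}{2}}\lambda^k\big/\prod_{j=1}^k(1+\lambda(1-q)q^{j-1})$ using \eqref{qfacth}, substitutes the explicit alternating-sum formula \eqref{identity3} for $W_{m,r,q}(n,k)$, swaps $\sum_k\sum_{\ell\le k}$ to $\sum_\ell\sum_{k\ge \ell}$, and reindexes with $i=k-\ell$. The product $\prod_{j=1}^{\ell+i}$ is then literally the denominator of the $k$-th factorial moment at $k=\ell+i$, and the $q$-exponent in \eqref{qgenD1} comes from combining $q^{\binom{k}{2}}$ with the $q^{\binom{k-\ell}{2}}$ inside \eqref{identity3}. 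Your proposal already contains this route in the first paragraph; for the Heine case you should follow it rather than the direct-substitution detour.
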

\begin{proof}
From the defining relation in \eqref{qw1} and the result in \eqref{qfacth},
\begin{equation*}
E_{\lambda}\big[(m[Y]_q+r)^n\big]=\sum_{k=0}^nm^kW_{m,r,q}(n,k)\frac{q^{\binom{k}{2}}\lambda^{k}}{\prod_{j=1}^k(1+\lambda(1-q)q^{j-1})}.
\end{equation*}
Using the explicit formula for the $(q,r)$-Whitney numbers of the second kind \cite[Theorem\ 16]{Mah2} given by
\begin{equation}
W_{m,r,q}(n,k)=\frac{1}{m^{k}[k]_q!}\sum_{\ell=0}^{k}(-1)^{k-\ell}q^{\binom{k-\ell}{2}}\binom{k}{\ell}_q(m[\ell]_q+r)^n,\label{identity3}
\end{equation}
we obtain
\begin{eqnarray*}
E_{\lambda}\big[(m[Z]_q+r)^n\big]&=&\sum_{k=0}^n\left\{\frac{1}{[k]_q!}\sum_{\ell=0}^{k}(-1)^{k-\ell}q^{\binom{k-\ell}{2}}\binom{k}{\ell}_q(m[\ell]_q+r)^n\right\}\\
& &\ \times\frac{q^{\binom{k}{2}}\lambda^{k}}{\prod_{j=1}^k(1+\lambda(1-q)q^{j-1})}\\
&=&\sum_{\ell=0}^n\sum_{k=\ell}^{n}(-1)^{k-\ell}q^{\binom{k-\ell}{2}-\binom{k}{2}}\frac{\lambda^k}{[\ell]_q![k-\ell]_q!}\frac{(m[\ell]_q+r)^n}{\prod_{j=1}^k(1+\lambda(1-q)q^{j-1})}.
\end{eqnarray*}
Reindexing the second sum yields \eqref{qgenD1}. Eq.~\eqref{qgenD} 
may be shown similarly.
\end{proof}

\begin{remark}
When $m=1$ and $r=0$ in the previous theorem, we have
	\begin{equation}
		E_{\phi}\big[[Y]_q^n\big]=\sum_{\ell=0}^n\sum_{i=0}^n(-\lambda)^iq^{-\binom{\ell}{2}-\ell i}\frac{\lambda^{\ell}}{[\ell]_q![i]_q!}\frac{[\ell]_q^n}{\prod_{j=1}^{\ell+i}(1+\lambda(1-q)q^{j-1})},
	\end{equation}
and
	\begin{equation}
		E_{\lambda}\big[[Z]_q^n\big]=\widehat{e}_q(-\lambda)\sum_{\ell=0}^n\frac{\lambda^{\ell}}{[\ell]_q!}[\ell]_q^n\equiv B_{n,q}(\lambda),
	\end{equation}
where $B_{n,q}(\lambda)$ is the $q$-Bell polynomials. On the other hand, if the mean is $\lambda=\frac{x}{m}$,
				$$E_{x/m}\left\{(m[Z]_q+r)^n\right\}=\widehat{e}_q\left(-\frac{x}{m}\right)\sum_{\ell=0}^n\frac{x^{\ell}}{m^{\ell}}\frac{(m[\ell]_q+r)^n}{[\ell]_q!}.$$
This explicit formula is due to Mangontarum and Katriel \cite{Mah2}. Thus
				$$E_{x/m}\big[(m[Z]_q+r)^n\big]=D_{m,r,q}(n,x),$$
where 
\begin{equation}
D_{m,r,q}(n,x)=\sum_{k=0}^{n}W_{m,r,q}(n,k)x^k\label{qDP}
\end{equation}
is the $(q,r)$-Dowling polynomials.
\end{remark}

It is worth mentioning that Mangontarum and Corcino \cite{Mahid} obtained the following pair of $n$-th order generalized factorial moments
\begin{equation}
E_{\lambda}\big[(\beta X+\gamma|\alpha)_{n}\big]=e^{-\lambda}\sum_{i=0}^{\infty}\frac{(i\beta+\gamma|\alpha)_{n}}{i!}\lambda^{i}\label{genfac1}
\end{equation}
\begin{equation}
E_{\lambda}\big[(\alpha X-\gamma|\beta)_{n}\big]=e^{-\lambda}\sum_{i=0}^{\infty}\frac{(i\alpha-\gamma|\beta)_{n}}{i!}\lambda^{i},\label{genfac2}
\end{equation}
where $X$ is a Poisson random variable with mean $\lambda$ and $\alpha$, $\beta$ and $\gamma$ may be real or complex numbers. Here,
\begin{equation}
(t|\alpha)_{n}=t(t-\alpha)(t-2 \alpha)\cdots(t-n\alpha+\alpha),\label{gf}
\end{equation}
with initial conditions $(t|\ \alpha)_{n}=0$ when $n\leq0$ and $(t|\ \alpha)_{0}=1$. 
Notice that \eqref{genfac1} unifies the factorial moment in \eqref{factmom} and the $n$-th moment in \eqref{expb}. More precisely,
\begin{itemize}
	\item when $\beta=1$, $\gamma=0$ and $\alpha=0$,
				$$E_{\lambda}\big[(\beta X+\gamma|\alpha)_{n}\big]=E_{\lambda}\big[X^n\big];$$
	\item when $\beta=1$, $\gamma=0$ and $\alpha=1$,
				$$E_{\lambda}\big[(\beta X+\gamma|\alpha)_{n}\big]=E_{\lambda}\big[(X)_n\big].$$
\end{itemize}
Other known ``Bell-type" and ``Dowling-type" polynomials (see \cite{Cheon,Cor,Mah3,Mah1,Mez,Priv}) can be shown to be particular cases of Eqs.~\eqref{genfac1} and \eqref{genfac2}. Furthermore, Corcino and Mangontarum \cite{CorMah} obtained the generalized $q$-factorial moments 
\begin{equation}
E_{\phi}\big[\left[[\beta Y]_{q}+[\gamma]_{q}|[\alpha]_{q}\right]_{n,q}\big]=\sum_{j=0}^{\infty}\hat{e}_{q^{\beta},j}(-\lambda)\frac{(q^{\beta}\lambda)^j\left[[\beta j]_{q}+[\gamma]_{q}|[\alpha]_{q}\right]_{n,q}}{[j]_{q^{\beta}}!\prod_{i=1}^j\left(1+\lambda(1-q^{\beta})q^{\beta(i-1)}\right)}\label{ngenfacthei1}
\end{equation}
and
\begin{equation}
E_{\lambda}\big[\left[[\beta Z]_{q}+[\gamma]_{q}|[\alpha]_{q}\right]_{n,q}\big]=\hat{e}_{b}(-\lambda)\sum_{j=0}^{\infty}\left[[\beta j]_{q}+[\gamma]_{q}|[\alpha]_{q}\right]_{n,q}\frac{\lambda^{j}}{[j]_{b}!},\label{qgenfacteul1}
\end{equation}
where $Y$ is a random variable with Heine distribution and mean $\phi=\frac{\lambda}{1+\lambda(1-q^{\beta})}$, and $Z$ is a random variable with an Euler distribution and mean $\lambda$. The notations 
\begin{equation}
\left[[\beta Z]_{q}+[\gamma]_{q}|[\alpha]_{q}\right]_{n,q}=\prod_{j=0}^{n-1}\left([\beta t]_q+[\gamma]_q-[\alpha j]_q\right)
\end{equation}
and
\begin{equation}
\hat{e}_{q^{\beta},j}(-\lambda)=\sum_{l=0}^{\infty}\left[\frac{q^{\beta\binom{j}{2}}(-\lambda)^l}{[l]_{q^{\beta}}!\prod_{i=1}^l\left(q^{\beta(i-1)}+\lambda(1-q^{\beta})q^{\beta j}\right)}\right]
\end{equation}
are used.
\eqref{ngenfacthei1} and \eqref{qgenfacteul1} are found to be $q$-analogues of \eqref{genfac1}. By thoroughly investigating \eqref{qgenD}, it is obvious that this result is not generalized by \eqref{ngenfacthei1} and \eqref{qgenfacteul1}.

Privault \cite{Priv} defined an extension of the classical Bell numbers as
\begin{equation*}
e^{ty-\lambda(e^t-t-1)}=\sum_{k=0}^{\infty}B_{n}(y,\lambda)\frac{t^k}{k!}.
\end{equation*}
Moreover, he obtained the following $n$-th moment of a Poisson random variable
\begin{equation}
E_{\lambda}\big[(X+y-\lambda)^{n}\big]=B_{n}(y,-\lambda),\label{GenBell}
\end{equation}
where
\begin{equation}
B_{n}(y,-\lambda)=\sum_{k=0}^n\binom{n}{k}(y-\lambda)^{n-k}\sum_{j=0}^k\sstirling{k}{j}\lambda^j,\label{GenBell2}
\end{equation}
Corcino and Corcino \cite{Cor} showed that the $(r,\beta)$-Bell polynomials satisfy
\begin{equation}
G_{n,\beta,r}(x)=\sum_{k=0}^n\binom{n}{k}r^{n-k}\sum_{j=0}^k\beta^{k-j}\sstirling{k}{j}x^j.\label{GenBell3}
\end{equation}
It then follows that
$$G_{n,1,y-\lambda}(\lambda)=B_{n}(y,-\lambda).$$
The next theorem is analogous to these identities.
\begin{theorem}
The $(q,r)$-Dowling polynomials satisfy the identity
\begin{equation}
D_{m,r,q}(n,x)=\sum_{k=0}^n\binom{n}{k}r^{n-k}\sum_{j=0}^km^{k-j}\sstirling{k}{j}_qx^j.\label{genpriv}
\end{equation}
\end{theorem}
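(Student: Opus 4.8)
The plan is to reduce the statement to the $r=0$ case by means of the convolution identity \eqref{tab4}, and then to identify the $r=0$ specialization of the $(q,r)$-Whitney numbers of the second kind with the $q$-Stirling numbers of the second kind. I begin from the defining relation of the $(q,r)$-Dowling polynomials in \eqref{qDP}, namely
\[
D_{m,r,q}(n,x)=\sum_{k=0}^{n}W_{m,r,q}(n,k)\,x^k .
\]

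First I would invoke \eqref{tab4} with the splitting $r_1=0$ and $r_2=r$, which is legitimate since that identity holds for any decomposition $r_1+r_2=r$. This gives
\[
W_{m,r,q}(n,k)=\sum_{j=k}^{n}\binom{n}{j}r^{\,n-j}\,W_{m,0,q}(j,k).
\]
Next I would specialize the explicit formula \eqref{expl2} to $r=0$. Because $[0]_q=0$, every composition in which $c_0>0$ contributes a vanishing factor $(m[0]_q)^{c_0}$, so the sum collapses to $c_1+\cdots+c_k=j-k$, from which the scalar $m^{\,j-k}$ factors out uniformly. Comparing the result with \eqref{qS2} then yields the key identification
\[
W_{m,0,q}(j,k)=m^{\,j-k}\,\sstirling{j}{k}_q .
\]

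Substituting this into the expression for $W_{m,r,q}(n,k)$ and then into $D_{m,r,q}(n,x)$ produces a double sum over $k$ and $j$. The remainder is bookkeeping: interchange the order of summation so that $\binom{n}{j}$ carries the outer index, pair the power $x^k$ with $\sstirling{j}{k}_q$, and finally relabel the indices $j\leftrightarrow k$ to reach the stated form \eqref{genpriv}. As a consistency check, letting $q\to1$ collapses $\sstirling{k}{j}_q$ to $\sstirling{k}{j}$ and recovers \eqref{GenBell3} with $\beta=m$, i.e.\ the $(r,\beta)$-Bell polynomial $G_{n,m,r}(x)$.

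I do not expect a genuine obstacle, since each step is either a direct application of a result established earlier in the paper or a routine rearrangement of summation indices. The only point demanding care is the $r=0$ reduction of \eqref{expl2}: one must verify that the $c_0$-term drops out cleanly so that the factor $m^{\,j-k}$ can be extracted uniformly across all compositions, after which the match with \eqref{qS2} is immediate.
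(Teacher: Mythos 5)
Your proof is correct, but it takes a genuinely different route from the paper's. The paper proves \eqref{genpriv} probabilistically: it expands $(m[Z]_q+r)^n$ by the binomial theorem, takes expectations under the Euler distribution with mean $x/m$, identifies $E_{x/m}\big[[Z]_q^k\big]$ with the $q$-Bell polynomial $B_{k,q}(x/m)=\sum_{j=0}^k\sstirling{k}{j}_q(x/m)^j$, and concludes from the fact (established in the preceding remark, quoting \cite{Mah2}) that $E_{x/m}\big[(m[Z]_q+r)^n\big]=D_{m,r,q}(n,x)$. You instead stay entirely within the algebraic--combinatorial part of the paper: the $A$-tableau convolution identity \eqref{tab4} with $r_1=0$, $r_2=r$, the symmetric-polynomial formula \eqref{expl2} specialized at $r=0$, and a routine interchange and relabeling of summation indices applied to the definition \eqref{qDP}. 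Both arguments are sound; each buys something. Yours is self-contained, needs no probabilistic apparatus or externally quoted moment formulas, and makes transparent that \eqref{genpriv} is essentially the identity \eqref{tab4} at $r_1=0$ summed against $x^k$; the paper's proof is shorter given the Euler-distribution machinery already in place, and it exhibits $D_{m,r,q}(n,x)$ directly as a moment, which is what makes the analogy with Privault's identity \eqref{GenBell2} the paper is pursuing immediately visible. The one step in your argument that genuinely requires the care you flag is the identification $W_{m,0,q}(j,k)=m^{j-k}\sstirling{j}{k}_q$: the paper's \eqref{qS2} is written with the factors of $m$ already suppressed (and with stray exponents $i_1,\ldots,i_k$ where $c_1,\ldots,c_k$ are meant), so the honest justification is exactly the computation you sketch --- compositions with $c_0>0$ vanish since $[0]_q=0$, and $m^{j-k}$ then factors out uniformly --- or, alternatively, a direct comparison of \eqref{qw2} at $r=0$ with Carlitz's definition of $\sstirling{n}{k}_q$, which yields the same identification.
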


\begin{proof}
Using the binomial theorem, we have
\begin{eqnarray*}
E_{x/m}\big[(m[Z]_q+r)^n\big]&=&\sum_{k=0}^n\binom{n}{k}r^{n-k}m^kE_{x/m}\big[[Z]_q^k\big]\\
&=&\sum_{k=0}^n\binom{n}{k}r^{n-k}m^kB_{n,q}\left(\frac{x}{m}\right)\\
&=&\sum_{k=0}^n\binom{n}{k}r^{n-k}m^{k}\sum_{j=0}^k\sstirling{k}{j}_q\left(\frac{x}{m}\right)^j.
\end{eqnarray*}
The desired result follows from the fact that $E_{x/m}\big[(m[Z]_q+r)^n\big]=D_{m,r,q}(n,x)$.
\end{proof}

\begin{remark}
As $q\rightarrow 1$, we obtain the $(r,\beta)$-Bell polynomial identity in 
Eq.~\eqref{GenBell3}. If the mean is replaced with $\lambda$, then
for an Euler random variable $Z$,
$$E_{\lambda}\big[(m[Z]_q+r)^n\big]=\sum_{k=0}^n\binom{n}{k}r^{n-k}\sum_{j=0}^km^{k}\sstirling{k}{j}_q\lambda^j.$$
As $q\rightarrow 1$, we get \cite[Eq.~34]{Mahid}
$$E_{\lambda}\big[(mX+r)^n\big]=\sum_{k=0}^n\binom{n}{k}r^{n-k}\sum_{j=0}^km^{k}\sstirling{k}{j}\lambda^j.$$
When $m=1$ and $r=y-\lambda$,
\begin{equation}
D_{1,y-\lambda,q}(n,x)=\sum_{k=0}^n\binom{n}{k}(y-\lambda)^{n-k}\sum_{j=0}^k\sstirling{k}{j}_qx^j.\label{genpriv2}
\end{equation}
This is a $q$-analogue of Privault's identity since \eqref{genpriv2} $\rightarrow$ \eqref{GenBell2} as $q\rightarrow 1$.
\end{remark}

\section{Acknowledgment}
The author is thankful to the editor-in-chief and to the referee(s) for
reading carefully the paper and giving helpful comments and
suggestions. Special thanks also to Dr.~Jacob Katriel for sharing his
insights during the early stage of this paper. This research is
supported by the Mindanao State University through the Special Order
No.~26 --- OP, Series of 2016.

\end{document}